\def\url@leostyle{%
  \@ifundefined{selectfont}{\def\UrlFont{\sf}}{\def\UrlFont{\small\ttfamily}}}
\let\oldlabel=\label
\def\prellabel{\marginparsep=1em
    \def\label##1{\oldlabel{##1}\ifmmode\else\ifinner\else
         \marginpar{{\footnotesize\ \\ \tt
                    ##1}}\fi\fi}}
\def\int{\operatorname{int}}
\def\conv{\operatorname{conv}}
\def\vertex{\operatorname{vert}}
\def\rest{\operatorname{rest}}
\def\ess{\operatorname{ess}}
\def\Hom{\operatorname{Hom}}
\def\pol{\operatorname{Pol}}
\def\Aff{\operatorname{Aff}}
\def\join{\operatorname{join}}
\def\C{\operatorname{C}}
\def\o{\operatorname{o}}
\def\op{\operatorname{op}}
\def\Sets{\mathbf{Sets}}
\def\Poly{\mathbf{Pol^\copyright}}
\def\Cones{\mathbf{Cones^\copyright}}
\def\Pol{\mathbf{Pol}}
\def\cD{{\mathcal D}}
\def\RR{{\mathbb R}}
\def\ZZ{{\mathbb Z}}
\def\NN{{\mathbb N}}
\def\AA{{\mathbb A}}
\let\phi=\varphi
\let\theta=\vartheta
\newtheorem{lemma}{Lemma}[section]
\newtheorem{corollary}[lemma]{Corollary}
\newtheorem{theorem}[lemma]{Theorem}
\newtheorem{proposition}[lemma]{Proposition}
\theoremstyle{definition}
\newtheorem{definition}[lemma]{Definition}
\newtheorem{remark}[lemma]{Remark}
\newtheorem{example}[lemma]{Example}
\begin{document}

\title[Affine hom-complexes]{Affine hom-complexes}

\begin{abstract}
For two general polytopal complexes the set of face-wise affine maps between them is shown to be a polytopal complex in an algorithmic way. The resulting algorithm for computing the affine hom-complex is analyzed in detail. There is also a natural tensor product of polytopal complexes, which is the left adjoint functor for $\Hom$. This extends the corresponding facts from single polytopes, systematic study of which was initiated in \cite{Hompolytopes,Homregpolytopes}. Explicit examples of computations of the resulting structures are included. In the special case of simplicial complexes, the affine hom-complex is a functorial subcomplex of Kozlov's combinatorial hom-complex \cite{Kozlov}, which generalizes Lov\'asz' well-known construction \cite{Lovasz} for graphs.
\end{abstract}

\author{M. Bakuradze, A. Gamkrelidze, and J. Gubeladze}

\address{Department of Mathematics\\
Faculty of Exact and Natural Sciences\\
         Tbilisi State University\\
         3 Chav\-cha\-vadze Ave., Tbilisi 0179, Georgia}
\email{malkhaz.bakuradze@tsu.ge}

\address{Department of Computer Sciences\\
Faculty of Exact and Natural Sciences\\
         Tbilisi State University\\
         3 Chavchavadze Ave., Tbilisi 0179, Georgia}
\email{alexander.gamkrelidze@tsu.ge}

\address{Department of Mathematics\\
         San Francisco State University\\
         1600 Holloway Ave.\\
         San Francisco, CA 94132, USA}
\email{soso@sfsu.edu}

\thanks{The authors were supported by Georgian NSF grant DI/16/5-103/12; Gubeladze was also supported by NSF grant DMS 1301487}

%\date{24 March 2014}

\subjclass[2010]{Primary 51A10, 52A25; secondary 52B70, 55U10}

\keywords{Polytope, affine map, face poset, hom-polytope, tensor product, simplicial complex, polyhedral complex, hom-complex}

\maketitle

\section{Introduction}

For two (convex) polytopes $P$ and $Q$ the set of affine maps between them is a polytope in a natural way. We denote it by $\Hom(P,Q)$. The software \textsf{Polymake} \cite{polymake,Joswig} has a module for computing these \emph{hom-polytopes}. A systematic theory for hom-polytopes was initiated in \cite{Hompolytopes}. In general, the construction $\Hom(P,Q)$ is very fragile in the sense that a small `perturbation' of the input polytopes $P$ and $Q$ may change even the combinatorial type of the hom-polytope. Our treatment of the category of polytopes and their affine maps, denoted by $\Pol$, is motivated by a conjectural fusion of algebraic and geometric aspects of $\Pol$ into a homological theory of polytopes. One of the initial observations here is that there is a symmetric \emph{tensor product} of polytopes, satisfying the usual conjunction $\otimes\dashv\Hom$, and there is a parallel theory for cones  \cite{Hompolytopes,Valby}.

Determination of the facets of $\Hom(P,Q)$ is straightforward; see Section \ref{Basic}. On the other extreme, determination of the vertices of $\Hom(P,Q)$ is a real challenge \cite{Hompolytopes}. Simplices are the \emph{free} objects in $\Pol$: every map from the vertices of a simplex $\sigma$ to a polytope $P$ extends uniquely to an affine map $\sigma\to P$, i.e., $\Hom(\sigma,P)=P^{\#\vertex(\sigma)}$. However, just outside the class of simplices, already for high dimensional cubes and cross-polytopes, one is lead to surprisingly rich combinatorics \cite{Homregpolytopes}.

One can trace the relevance of the concept of polytopal hom-objects, without introducing them explicitly, to triangulation theory \cite{Fiber}, statistics \cite{Sandwiched}, and quantum theory \cite{Physics}. More recently, the hom and tensor functors for general convex cones in the context of nonlinear optimization problems were studied in \cite{Velasco}. 

The categories of polytopes $\Pol$ is a full subcategory of the category of polytopal complexes and their affine maps, which we denote by $\Poly$. Here an affine map between two complexes means a map between the support spaces which is affine on each face.

We show that for two general polytopal complexes $\Pi_1$ and $\Pi_2$ the set of face-wise affine maps between them is a polytopal complex and there is a \emph{tensor product complex} $\Pi_1\otimes\Pi_2$ so that the two constructions form a pair of adjoint functors. This is proved in Theorem \ref{Existence-theorem} and Corollary \ref{conjunction}. In deriving these properties, the embedding $\Poly$ into the category of \emph{conical complexes} $\Cones$ via the \emph{coning} functor is very useful; Section \ref{Coning}.

The algorithmic nature of Theorem \ref{Existence-theorem} is analyzed in detail in Section \ref{Implementing}, where several challenges along the way of implementation of the algorithm are addressed. In particular, for \emph{Euclidean} complexes the process of determination of the hom-complex can be substantially simplified. Furthermore, the hom-complex between simplicial complexes is completely transparent -- it turns out to be a functorial subcomplex of Kozlov\rq{}s combinatorial hom-complex between simplicial complexes \cite[Ch.9.2]{Kozlov} -- a higher dimensional version of Lov\'asz\rq{}  hom-complex between graphs, implicit in \cite{Lovasz} and explicitly introduced later in \cite{Kozlov-Israel}. The latter concept has initiated the subject of topological methods in the study of chromatic numbers of graphs; see \cite{Lovasz-conjecture,Dochter,Kozlov} .

\bigskip\noindent\emph{Acknowledgment.} We thank anonymous reviewers for their comments that lead to substantial improvement of the exposition.

\section{Basic facts}\label{Basic-facts}

All polytopes in this paper are assumed to be convex. Our references for polytopes are  \cite[Ch.1]{Kripo} and \cite{Ziegler}.

%For generalities on simplicial complexes, such as triangulations, barycentric triangulations, geometric realizations, order complexes etc, the reader is referred to \cite[Ch.1]{Kripo}, \cite[Ch.4]{Kozlov}, \cite{Wachs}.

For the reader's convenience we now summarize basic definitions and facts.

\subsection{Affine spaces and maps}\label{Affinespaces} All our vector spaces are real and finite dimensional. An \emph{affine space} is a parallel translate of a vector subspace, i.e., a subset of the form $H=x+V'\subset V$, where $V$ is a vector space, $V'\subset V$ a subspace, and $x\in V$. A map between two affine spaces $f:H_1\to H_2$ is \emph{affine} if $f$ respects \emph{barycentric coordinates}. Equivalently, if $H_1\subset V_1$ and $H_2\subset V_2$ are ambient vector spaces, then $f:H_1\to H_2$ is affine if there is a linear map $\alpha:V_1\to V_2$ and an element $x\in V_2$ such that $f$ is the restriction of $\alpha(-)+x$ to $H_1$.

For a subset $X\subset\RR^n$, denote by:

\begin{enumerate}[{\rm $\centerdot$}]
\item $\RR X$ the linear span of $X$,
\item $\conv(X)$ the convex hull of $X$,
\item $\Aff(X)$ the affine hull of $X$,
\end{enumerate}

The set of affine maps between two affine spaces $H_1$ and $H_2$ will be denoted by $\Aff(H_1,H_2)$.

Let $H_1\subset V_1$ and $H_2\subset V_2$ be affine subspaces in their ambient vector spaces. Upon fixing an affine surjective map $\pi:V_1\to H_1$, which restricts to the identity map on $H_1$, we get the injective map
$$
\theta_\pi:\Aff(H_1,H_2)\to\Aff(V_1,V_2),\quad f\mapsto \iota\circ f\circ\pi,
$$
where $\iota:H_2\hookrightarrow V_2$ is the inclusion map. We also have the embedding into the space of linear maps:
\begin{align*}
\theta:\Aff(V_1,V_2)\to&\Hom(V_1\oplus\RR,V_2\oplus\RR),\\
&(\theta(f))(cx,c)=(cf(x),c),\\
&(\theta(f))(x,0)=(f(x)-f(0),0),\\
&\quad\qquad\qquad\qquad f\in\Aff(V_1,V_2),\quad x\in V_1,\quad c\in\RR.
\end{align*}
The composite map $\theta\circ\theta_\pi$ identifies $\Aff(H_1,H_2)$ with an affine subspace of $\Hom(V_1\oplus\RR,V_2\oplus\RR)$ and the induced convexity notion in $\Aff(H_1,H_2)$ is independent of the choice of $\pi$.

\subsection{Polytopes and cones}\label{Polytopes-and-cones} A \emph{polyhedron} is the intersection of finitely many closed halfspaces, i.e., the solution set to a finite system of not necessarily homogeneous linear inequalities. A \emph{polytope} always means a convex polytope in an ambient vector (or affine) space, i.e., polytopes are the bounded polyhedra. An \emph{affine map} between two polyhedra is the restriction of an affine map between the ambient spaces.

For two polytopes $P$ and $Q$ we write $P\cong Q$ if the polytopes are isomorphic objects in the category of polytopes and affine maps $\Pol$.

For two polytopes in their ambient vector spaces $P\subset V$ and $Q\subset W$ let:

\begin{enumerate}[{\rm $\centerdot$}]
\item $\Hom(P,Q)$ denote the set of affine maps $P\to Q$;
\item $\Aff(P,Q)=\Aff(\Aff(P),\Aff(Q))$;
\item $\vertex(P)$ denote the set of vertices of $P$;
\item $\int(P)$ denote the relative interior of $P$ in $\Aff(P)$;
\item $\partial P=P\setminus\int(P)$, the boundary of $P$;
\item $\join(P,Q)=\conv\big((P,0,0),(0,Q,1)\big)\subset V\oplus W\oplus\RR$.
\item $P\otimes Q=\conv\big((v\otimes w,v,w)\ :\ v\in\vertex(P),\ w\in\vertex(Q)\big)\subset(V\otimes W)\oplus V\oplus W$.
\end{enumerate}

For generalities on cones see \cite[Chapter 1]{Kripo} and \cite[Chapter 1]{Ziegler}.

The set of nonnegative reals is denoted by $\RR_+$. For a subset $X\subset\RR^n$, the set $\RR_+ X$ of non-negative real linear combinations of finitely many elements of $X$ is called the \emph{conical hull} of $X$. A \emph{cone} in this paper means a finite polyhedral \emph{pointed} cone, i.e., the conical hull  of a finite subset of $\RR^n$, containing no non-zero linear subspace. Equivalently, a cone is a polyhedron defined by homogeneous systems of linear inequalities and containing no non-zero subspace.  For a cone $C$ the \emph{dual conical set} $C^{\o}=\{x\in\RR^n\ |\ x\cdot y\ge0\ \text{for all}\ y\in C\}\subset\RR^n$ is a cone if and only if $\dim C=n$. Here $x\cdot y$ is the dot-product. If $\dim C=n$ then $C^{\o}$ is called the \emph{dual cone} for $C$.

An affine map of cones is always assumed to be linear, i.e., mapping $0$ to $0$. The set of affine maps between two cones $C$ and $D$ will be denoted by $\Hom(C,D)$.

The \emph{tensor product} of two cones in their ambient vector spaces $C\subset V$ and $D\subset W$ is defined to be the cone
$$
C\otimes D=\RR_+\{x\otimes y\ :\ x\in C,\ y\in D\}\subset V\otimes W.
$$

A \emph{facet} of a polytope or cone is a maximal proper face.

\subsection{Limits}\label{Limits}

The standard reference for basic categorial concepts is \cite{Categories}. For generalities on enriched categories see \cite{Enriched}. Several of these concepts in a geometric setting can be found in \cite{Kozlov}. The reader needs no background in category theory though because every categorial notion (e.g., functor, (co)limit, conjunction, monoidal structure), used in the paper, is eventually explained in geometric terms.

Because of its importance in the proof of the main existence result (Theorem \ref{Existence-theorem}), we recall how the limit of a finite diagram in $\Pol$ is determined. Let $\cD$ be such a diagram, i.e., a family of finitely many polytopes $P_1,\ldots,P_k$ and finitely many affine maps between them. Then
\begin{align*}
\lim_{\leftarrow}\cD=\big\{(x_1,\ldots,x_k)\in P_1\times\cdots\times P_k\ :\ &f(x_i)=x_j\ \text{for any}\\ 
&f:P_i\to\ P_j\ \text{in}\ \cD\big\}.
\end{align*}
This limit is a polytope in the limit affine space, resulting from the corresponding diagram of affine hulls $\Aff(P_1),\ldots,\Aff(P_k)$ and affine maps, the latter limit being determined similarly.

\subsection{Basic facts}\label{Basic}

The following theorem encapsulates basic facts on polytopes and cones; see \cite[\S2--3]{Hompolytopes} for details.

\begin{theorem}\label{folklore} Let $P$ and $Q$ be polytopes and let $C,D,E$ be cones.
\begin{enumerate}[{\rm (a)}]
\item The set $\Hom(C,D)$ is a $(\dim C\cdot\dim D)$-dimensional cone in the vector space $\Hom(\RR C,\RR D)$.
\item The extremal rays of $C\otimes D$ are the tensor products of the extremal rays of $C$ and $D$.
\item For faces $C'\subset C$ and $D'\subset D$ we have the face $C'\otimes D'\subset C\otimes D$. In general, $C\otimes D$ has many other faces.
\item The following map is a linear bijection
\begin{align*}
\Theta:\Hom(C\otimes D,E)\to\Hom(C,\Hom(D,E)),\quad\big(\Theta(\alpha)(x)\big)(y)=\alpha(x\otimes y).
\end{align*}
\item The set $\Hom(P,Q)$ naturally embeds as a polytope into the affine space $\Aff(P,Q)$.
\item $\dim(\Hom(P,Q))=\dim P\dim Q+\dim Q$.
\item The facets of $\Hom(P,Q)$ are the subsets of the form
$$
H(v,F)=\{f\in\Hom(P,Q\}\ |\ f(v)\in F\},
$$
where $v\in P$ is a vertex and $F\subset Q$ is a facet.
\item For every vertex $w\in Q$, the map $f:P\to Q$, $f(P)=\{w\}$, is a vertex of $\Hom(P,Q)$. In general, $\Hom(P,Q)$ has many other vertices.
\item $\Hom(P\otimes Q, R)\cong\Hom(P,\Hom(Q,R))$.
\item
If $\sigma$ is an $n$-dimensional simplex then
\begin{align*}
\Hom(\sigma,P)\cong P^{n+1},\qquad \sigma\otimes P\cong\join\big(\underbrace{P,\ldots,P}_{n+1}\big)
\end{align*}
($n$-fold iteration of $\join(-,P)$, applied to $P$).
\item If $\cD$ is the diagram in $\Pol$, consisting of $P$ and $Q$ and no affine map, then
$$
P\times Q=\lim_{\leftarrow}\cD,\qquad\join(P,Q)=\lim_{\to}\cD.
$$
\end{enumerate}
\end{theorem}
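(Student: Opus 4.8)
The plan is to establish the cone statements (a)--(d) directly, from explicit half-space descriptions together with separation by rank-one functionals, and then to deduce the polytope statements (e)--(k) from the corresponding descriptions of $\Hom(P,Q)$ and, for (i) and the join occurring in (j)--(k), from the cone adjunction (d) and the limit formula recalled in Section~\ref{Limits}.

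For (a) I would fix extreme ray generators $x_1,\dots,x_m$ of $C$ and facet functionals $\ell_1,\dots,\ell_N$ of $D$ (linear forms on $\RR D$ that are nonnegative on $D$), and observe that a linear map $f\colon\RR C\to\RR D$ lies in $\Hom(C,D)$ exactly when $\ell_k(f(x_i))\ge 0$ for all $i,k$ -- homogeneous linear inequalities in the entries of $f$, so $\Hom(C,D)$ is a polyhedral cone in $\Hom(\RR C,\RR D)$; pointedness follows from that of $D$ (if $f(C)\subset D$ and $-f(C)\subset D$ then $f=0$), and full-dimensionality from the interior point $f_0(v)=h(v)\,d$, where $h$ is a linear form with $h>0$ on $C\setminus\{0\}$ and $d\in\int D$, since this $f_0$ sends every $x_i$ into $\int D$. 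For (b) and (c) I would use bilinearity of $\otimes$ to write $C\otimes D=\RR_+\{x_i\otimes y_j\}$ with $x_i$, $y_j$ ranging over extreme ray generators of $C$ and $D$, note that $C\otimes D$ is pointed because $\phi'\otimes\psi'$ is strictly positive on it whenever $\phi',\psi'$ are strictly positive on $C,D$, and then separate: if $\phi\ge 0$ vanishes on $C$ only along $\RR_+ x_i$ and $\psi\ge 0$ vanishes on $D$ only along $\RR_+ y_j$, then $\phi\otimes\psi'+\phi'\otimes\psi$ is a supporting functional of $C\otimes D$ whose zero face is precisely $\RR_+(x_i\otimes y_j)$, which gives (b); taking $\phi$ instead to support a face $C'\subset C$ shows that $C'\otimes D$ is a face of $C\otimes D$, and intersecting $C'\otimes D$ with $C\otimes D'$ produces the face $C'\otimes D'$ of (c). Part (d) I would obtain from the tensor--hom adjunction of vector spaces cut down to cones: $\Theta(\alpha)$ is well defined because $x\otimes y\in C\otimes D$ for $x\in C$, $y\in D$; $\Theta$ is linear and injective since the elements $x\otimes y$ span $\RR(C\otimes D)=\RR C\otimes\RR D$; and it is surjective because the universal property of $\otimes$ turns a given $\beta\in\Hom(C,\Hom(D,E))$ into a linear map $\alpha$ with $\alpha(x\otimes y)=\beta(x)(y)$, which carries $C\otimes D=\RR_+\{x\otimes y\}$ into $\RR_+E=E$.

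Turning to polytopes, I would write the facets of $Q$ as $F_j=Q\cap\{\ell_j=b_j\}$, with $\ell_j\le b_j$ valid on $Q$, and note that $f\in\Hom(P,Q)$ iff $\ell_j(f(v))\le b_j$ for every vertex $v$ of $P$ and every $j$; this exhibits $\Hom(P,Q)$ as a polyhedron in $\Aff(P,Q)$, and evaluation at an affine basis of $\Aff(P)$ chosen among the vertices of $P$ embeds it into a bounded set, giving (e). The constant map with a value in $\int Q$ satisfies all these inequalities strictly, hence lies in the interior, so $\dim\Hom(P,Q)=\dim\Aff(P,Q)=\dim P\dim Q+\dim Q$, which is (f). Each $H(v,F_j)$ equals $\Hom(P,Q)\cap\{\ell_j(f(v))=b_j\}$, hence is a face, and conversely every facet arises by turning one of the defining inequalities into an equality, so it has this form; to see that $H(v,F_j)$ really is a facet I would build an $f\in\Hom(P,Q)$ sending $v$ into the relative interior of $F_j$ and all other vertices of $P$ into $\int Q$, so that exactly the one equality $\ell_j(f(v))=b_j$ holds -- this gives (g). For (h), the constant map to a vertex $w$ of $Q$ cannot be a proper midpoint $\frac12(f_1+f_2)$ in $\Hom(P,Q)$, since then $w=\frac12(f_1(x)+f_2(x))$ with $f_i(x)\in Q$ would force $f_1=f_2$ by extremality of $w$. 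For (j), the $n+1$ vertices of an $n$-simplex $\sigma$ form an affine basis of $\Aff(\sigma)$, so evaluation there is an affine bijection $\Hom(\sigma,P)\to P^{n+1}$ with affine inverse, whence $\Hom(\sigma,P)\cong P^{n+1}$. Part (i), the polytopal tensor--hom adjunction, I would get by first identifying the vertices of $P\otimes Q$ with the points $(v\otimes w,v,w)$, $v\in\vertex(P)$, $w\in\vertex(Q)$ -- projection onto $P\times Q$ has singleton fibres over its vertices -- so that an affine map $P\otimes Q\to R$ is the same datum as a bi-affine map $P\times Q\to R$, which by currying is the same as an affine map $P\to\Hom(Q,R)$, all identifications being affine in both directions; alternatively one may reduce to the cone version (d) once the coning functor is available. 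Finally, for (k), the limit formula of Section~\ref{Limits} applied to the diagram with no morphisms gives $P\times Q=\lim_{\leftarrow}\cD$, while dually an affine map out of $\join(P,Q)$ is precisely an arbitrary pair of affine maps on the two "ends", so $\join(P,Q)=\lim_{\to}\cD$; combining this colimit description with (i) and (j) via the Yoneda lemma identifies $\Hom(\sigma\otimes P,R)$ with $\Hom(P,R)^{n+1}$ and hence $\sigma\otimes P$ with the $(n+1)$-fold join of $P$, which completes (j) and (k).

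I expect the main obstacle to be the two adjunctions (d) and (i): the underlying set bijections are forced, and the actual work lies in checking that they and their inverses are morphisms of the correct type -- linear for cones, affine for polytopes -- which for (i) demands a careful look at the vertices and affine dependencies of $P\otimes Q$. Lesser points needing attention are the full-dimensionality assertions in (a) and (f) and, in (g), the explicit construction of a point in the relative interior of $H(v,F)$ pinning down its codimension; once the half-space and vertex descriptions of the various $\Hom$-objects are in place, everything else is routine bookkeeping.
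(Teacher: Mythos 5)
Your proposal cannot be checked against an internal argument, because the paper does not prove Theorem \ref{folklore} at all: it is presented as a compendium of known facts with a pointer to \cite[\S2--3]{Hompolytopes} for details. Judged on its own merits, your sketch is essentially correct and follows the standard lines one would find in that reference: the half-space description $\ell_k(f(x_i))\ge 0$ (resp.\ $\ell_j(f(v))\le b_j$ at vertices) for the hom-cone and hom-polytope, interior points given by rank-one maps into $\int D$ (resp.\ constant maps into $\int Q$) for the dimension counts in (a) and (f), separation by functionals of the form $\phi\otimes\psi'+\phi'\otimes\psi$ for (b)--(c), extremality of $w$ for (h), evaluation at an affine basis of vertices for (e) and the first half of (j), and the universal properties plus Yoneda for (i), (j), (k). Two spots are compressed but recoverable: in (g) you should make explicit that, since exactly one inequality is tight at your constructed map, that map lies in the relative interior of $H(v,F)$ and a whole neighborhood of it inside the hyperplane $\{\ell_j(f(v))=b_j\}$ stays in $\Hom(P,Q)$, giving codimension one (and that, the system having no implicit equalities by (f), every facet is cut out by one of the listed inequalities); and in (i) the direction from a bi-affine map $P\times Q\to R$ to an affine map on $P\otimes Q$ is exactly the point that needs proof -- it follows either from writing a bi-affine map as bilinear $+$ affine-in-each-slot pieces and using the ambient decomposition $(V\otimes W)\oplus V\oplus W$, or from the coning reduction to (d) that you mention, which is in fact the device the paper itself uses later (the functor $\C$ and Lemma \ref{universal-biaffine}) when it proves the analogous adjunction for complexes. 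With those details written out, your argument is a complete and self-contained proof of the theorem.
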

\noindent(We have omitted the obvious cone analogs of (g).)

Theorem \ref{folklore} in particular says that, for two polytopes $P$ and $Q$, the facets of $\Hom(P,Q)$ and the vertices of $P\otimes Q$ are straightforward. The works \cite{Hompolytopes,Homregpolytopes} explore the vertices of $\Hom(P,Q)$ in various situations. Similarly, for two cones $C$ and $D$ the facets of $\Hom(C,D)$ and the extremal rays of $C\otimes D$ are straightforward and the challenge is to understand the extremal rays of the former and the facets of the latter.

\section{Affine polyhedral complexes}\label{Existence}

\subsection{Complexes and their affine maps}\label{Polyhedral}
We start with the following general definition.

\begin{definition}\label{polyhedral}
An \emph{affine polyhedral complex} consists of (i) a finite family $\Pi$ of nonempty sets, called \emph{faces}, (ii)
a family $P_p$, $p \in \Pi$, of polyhedra, and (iii) a family $\pi_p:P_p \to p$ of bijections
satisfying the following conditions:
\begin{enumerate}[{\rm (a)}]
\item for each face $F$ of $P_p$, $p\in \Pi$, there exists $f \in \Pi$ with $\pi_p(F)=f$;
\item for all $p,q \in \Pi$ there exist faces $F$ of $P_p$ and $G$ of $P_q$ such that
$p \bigcap q =\pi_p(F)=\pi_q(G)$ and, furthermore, the restriction of $\pi_q^{-1}\circ \pi_p$ to $F$ is an affine isomorphism of the polyhedra $F$ and $G$.
\end{enumerate}

We denote an affine polyhedral complex simply by $\Pi$, assuming that the polyhedra $P_p$
and the maps $\pi_p\in \Pi$, are clear from the context.

$|\Pi|$ stands for the \emph{support space} $\bigcup_{p\in\Pi}p$ of $\Pi$, with the induced topology.

The maximal faces of $\Pi$ will be called its \emph{facets}.

We speak of a \emph{polytopal complex} if the polyhedra $P_p$ are polytopes and a \emph{conical complex} if the polyhedra $P_p$ are cones.

A \emph{simplicial complex} is polytopal complex whose faces are simplices.
\end{definition}
\noindent (Conical complexes are called \emph{weak fans} in \cite{Plycom}.)

The level of generality of polyhedral complexes in algebraic/topological combinatorics varies from Euclidean complexes (e.g., \cite{Stanley}), defined below, to even broader classes than affine polyhedral complexes, closer to \emph{regular $CW$-complexes} (e.g., \cite{Kozlov}). Since in this work we consider only affine polyhedral complexes we will suppress `affine'.

\medskip One has the following hierarchy of polytopal complexes:

\begin{align*}
&\text{Simplicial complexes}\subsetneq\\
&\hspace{4em}\text{Boundary complexes}\subsetneq\\
&\hspace{8em}\text{Euclidean complexes}\subsetneq\\
&\hspace{12em}\text{Polyhedral complexes}.
\end{align*}

\medskip\noindent Here a \emph{boundary complex} refers to a subcomplex of the full face poset of a single polytope and an \emph{Euclidean complex} refers to a complex whose support space admits an embedding into a vector space which is face-wise affine.

A more refined hierarchy of polytopal complexes in the presence of face-wise \emph{lattice structures} (in the sense of the integer lattice $\ZZ^d$) was considered in \cite{Plycom} where the automorphism groups of the associated arrangements of toric varieties were studied -- graded automorphisms in the affine case and full groups in the projective case.

The following examples illustrate the proper embeddings in the hierarchy above.

\begin{example}\label{complexes-examples}
In Figure 1, the complex $\Pi_a$ has six copies of the unit square $[0,1]^2$, forming the boundary of the unit cube $[0,1]^3$, and one big diagonal of $[0,1]^3$, glued together along common faces as shown. The complexes $\Pi_b$ and $\Pi_c$ have, respectively, three and four copies of $[0,1]^2$ as facets, glued together along common edges.

The complex $\Pi_a$ is obviously Euclidean; but it is not boundary. In fact, if $\partial([0,1]^3)$ was embedded into the face complex of a polytope $Q\subset\RR^d$, then the big diagonal would necessarily pierce the interior of the 3-dimensional sub-polytope $\conv(\partial([0,1]^3))\subset Q$, making impossible for this diagonal to be an edge of $Q$.

The complexes $\Pi_b$ and $\Pi_c$ are not even Euclidean. In fact, if the \emph{polytopal M\"obius strip} $\Pi_b$ was Euclidean then the three parallel segments, along which the squares are glued, would have same orientation. If $\Pi_b$ was Euclidean then the edges of the right visually non-distorted square would be parallel, forcing the square to collapse into a segment.

\begin{figure}[h!]
\caption{Three polytopal complexes}
\vspace{.15in}
\includegraphics[trim = 0mm 0.1in 0mm 0.1in, clip, scale=.25]{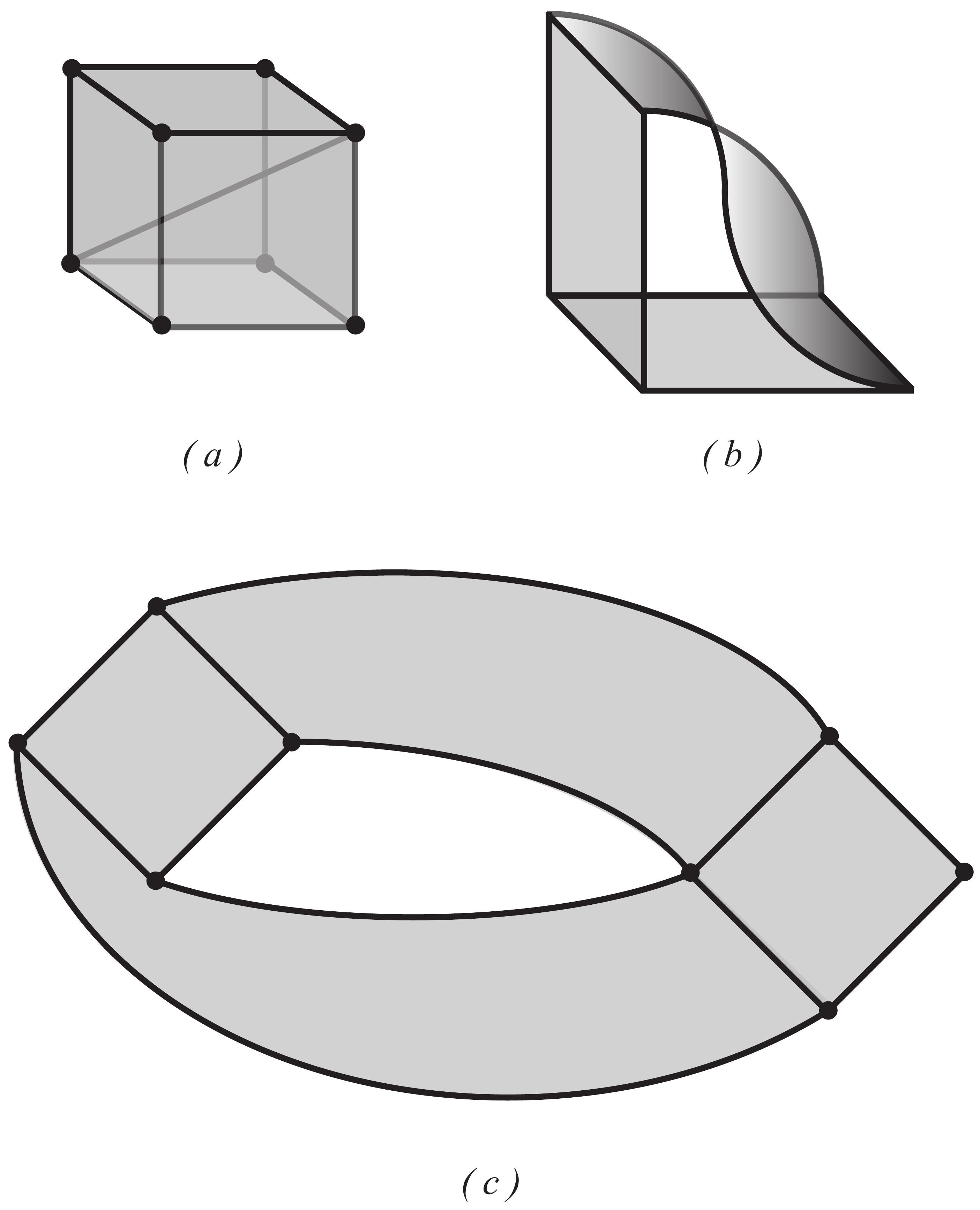}
\end{figure}
\end{example}

\begin{definition}\label{complex-maps}
Let $\Pi_1$ and $\Pi_2$ be two polyhedral complexes. An \emph{affine map} $\Pi_1\to\Pi_2$ is a map
$f:|\Pi_1|\to|\Pi_2|$, such that for every $p\in\Pi_1$ there exists $q\in\Pi_2$, satisfying the conditions $f(p)\subset q$ and $\pi_q\circ f\circ\pi_p^{-1}:P_p\to P_q$ is an affine map.  The set of affine maps $\Pi_1\to\Pi_2$ will be denoted by $\Hom(\Pi_1,\Pi_2)$ and called the \emph{affine hom-complex between $\Pi_1$ and $\Pi_2$}. The set $\Hom(\Pi_1,\Pi_2)$ is a subspace of the topological space of continuous maps $|\Pi_1|\to|\Pi_2|$ with the compact-open topology.
\end{definition}
\noindent (To keep notation simple, in the composition above the same $\pi$ is used for the structural maps of $\Pi_1$ and $\Pi_2$.)

\subsection{A topological computation}\label{Topological-computation} In the table below we describe the spaces
$$
\Hom(\Pi_i,\Pi_j),\quad i,j\in\{a,b,c\},
$$
for the polytopal complexes in Example \ref{complexes-examples} (Figure 3).  More precisely, we list the connected components up to affine isomorphism of polytopes or strong deformation retraction. For instance, the equality
$$
\Hom(\Pi_b,\Pi_b)=\{6\ \text{segments, retract}\ \Pi_b\}
$$
means that the affine hom-complex in question has seven connected components of which six are homeomorphic to the segment $[0,1]$ and $|\Pi_b|$ is a strong deformation retract of the seventh. Moreover, it will be shown in Section \ref{Hom-complex} that the affine hom-complexes are polytopal complexes; in the special case of $\Hom(\Pi_b,\Pi_b)$ the indicated connected components turn out to be segments.

The entry in the $\Pi_i$-th row and $\Pi_j$-th column refers to $\Hom(\Pi_i,\Pi_j)$:

\bigskip

\resizebox{0.95\textwidth}{!}{\begin{minipage}{\textwidth}
 \begin{tabular}{ l || c | c| r }
    \hline\hline
    $\Hom(-,-)$  & $\Pi_a$ & $\Pi_b$ & $\Pi_c$ \\ \hline\hline
    $\Pi_a$ & 12 points, retract $\Pi_a$ & retract $\Pi_b$ & retract $\Pi_c$ \\ \hline
    $\Pi_b$ & retract $\Pi_a$ & 6 segments, retract $\Pi_b$ & retract $\Pi_c$ \\ \hline
    $\Pi_c$ & retract $\Pi_a$ & retract $\Pi_b$ & 2 segments, retract $\Pi_c$
  \end{tabular}
\end{minipage}}

\bigskip

Here is our argument.

First we observe the following \emph{incommensurability property,} which is rather obvious from topological/combinatoral considerations: for any pair $i,j\in\{a,b,c\}$, $i\not=j$, any affine map $\Pi_i\to\Pi_j$ maps all of $|\Pi_i|$ to a facet of $\Pi_j$.

Pick $i,j\in\{a,b,c\}$, $i\not=j$. Any face $P\in\Pi_j$ is a strong deformation retract of $\Hom(\Pi_i,P)$ -- the set of affine maps $\Pi_i\to\Pi_j$ evaluating in $P$. This is shown as follows: $P$ embeds into $\Hom(\Pi_i,P)$ via $x\mapsto f$, where $f(|\Pi_i|)=x$, and for any $g\in\Hom(\Pi_i,P)$ we have the continuous family $\{g_t\}_{[0,1]}\subset\Hom(\Pi_i,P)$, where $g_t$ is the composition of $g$ with the homothety of $P$, centered at the barycenter of $g(|\Pi_i|)$ and with coefficient $1-t$. Consequently, $\Hom(\Pi_i,\Pi_j)$ is covered by closed subspaces, indexed by the facets of $\Pi_j$, each containing the corresponding facet as a strong deformation retract. Moreover, the intersection $X$ of any subfamily of these spaces is indexed by the corresponding intersection of facets of $\Pi_j$, the latter sitting inside $X$ as a strong deformation retract, and the involved strong deformation retractions are all compatible. This explains the non-diagonal entries in the table.

As for the diagonal entries, we first separate the connected components formed by the affine maps which evaluate in single facets. Here the same argument we used for the non-diagonal entries produces the indicated strong deformation retracts.

Finally, the remaining components are accounted for as follows.

An affine map $f:\Pi_a\to\Pi_a$, not evaluating in a facet, must be an automorphism. Such an $f$ extends to a unique affine automorphism of the cube, bounded by the six squares. But $f$ must also map the spatial diagonal onto itself. If the diagonal maps identically to itself, then $f$ is uniquely determined by a permutation of the three edges, adjacent to one of the end-points of the diagonal. Hence six isolated points, and six more correspond to those $f$-s that invert the spatial diagonal.

Let $E_1,E_2,E_3$ be the three parallel edges in $\Pi_b$ along which the facet squares are glued together. An affine map $f:\Pi_b\to\Pi_b$, not evaluating in a facet, must map each of these edges to an edge from the same set. There are six possibilities:
\begin{align*}
E_1\mapsto E_{\sigma(1)},\ E_2\mapsto E_{\sigma(2)},\ E_3\mapsto &E_{\sigma(3)},\qquad \sigma\in S_3,
\end{align*}
defining six mutually homeomorphic connected components of $\Hom(\Pi_b,\Pi_b)$. It is enough to consider the case $\sigma={\bf 1}$. Clearly, $f$ is uniquely determined by its restriction to the end-points of $E_1$. We can think of $E_1$ as $[0,1]$. Moreover, the M\"obius strip structure implies $f(0)=1-f(1)$. So the map $f$ is completely determined by the value $f(0)$ which can be any point in $[0,1]$. So the connected component is a segment.

An affine map $f:\Pi_c\to\Pi_c$, not evaluating in a facet, must map the left and right squares to themselves and the other two squares either to themselves or to each other. This follows from keeping track of the vertex/edge/facet incidences. So we have two homeomorphic connected components and it is enough to characterize the connected component, containing the identity map ${\bf1}:\Pi_c\to\Pi_c$. Since the left vertex of the right square can not be perturbed continuously so that it remains an element of the three adjacent facets, the only way the identity map can be continuously perturbed is via sliding the upper vertex of the same square along the south-west edge. The complex $\Pi_c$ is such that every point in this edge, thought of as the image of the mentioned upper vertex under $f$, uniquely determines the whole map $f$. So the connected component of the identity map is a segment.

\medskip We remark that the determination of the \emph{polytopal complex} structures on the affine hom-complexes $\Hom(\Pi_i,\Pi_j)$, due to the huge size, is  beyond reach unless one actually implements the algorithms which will be introduced in Sections \ref{Hom-complex} and \ref{Implementing}.

The argument we used above to describe deformation retracts of certain connected components works for arbitrary polytopal complexes. This leads to a general result which is interesting even for single polytopes and hence worth of writing up:

\begin{proposition}\label{deformation-retract}
Let $\Pi_1$ and $\Pi_2$ be polytopal complexes. Then $|\Pi_2|$ is a strong deformation retract of a connected component of $\Hom(\Pi_1,\Pi_2)$. Moreover, the deformation retraction can be chosen to be affine in an appropriate sense. In particular, for any two polytopes $P$ and $Q$, there is an affine embedding $\iota:Q\to\Hom(P,Q)$ and an affine map $h:\Hom(P,Q)\times[0,1]\to\Hom(P,Q)$, such that
\begin{enumerate}[{\rm (i)}]
\item $h(-,0)$ is the identity map of $\Hom(P,Q)$,
\item $h(-,1):\Hom(P,Q)\to\iota(Q)$,
\item $h(-,t)$ is the identity map on $\iota(Q)$ for every $t\in[0,1]$.
\end{enumerate}
\end{proposition}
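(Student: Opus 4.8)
The plan is to realise $|\Pi_2|$ inside $\Hom(\Pi_1,\Pi_2)$ as the subspace of \emph{constant maps} and to contract the relevant connected component onto it by uniformly shrinking each affine map toward a constant, exactly in the spirit of the computation of the diagonal entries in Section~\ref{Topological-computation}. For the ``in particular'' clause, with $P$ and $Q$ single polytopes, this can be done explicitly. Let $\iota(x)=c_x\in\Hom(P,Q)$ be the constant map with value $x$. Since $c_{(1-\lambda)x+\lambda y}=(1-\lambda)c_x+\lambda c_y$ in the affine space $\Aff(P,Q)$, and since evaluation at any point of $P$ is a left inverse of $\iota$, the map $\iota\colon Q\to\Hom(P,Q)$ is an affine embedding. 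Fix a point $b\in P$ (the barycenter, say) and put
\[
h(f,t)=(1-t)\,f+t\,\iota\!\left(f(b)\right),\qquad\text{so that}\quad h(f,t)(x)=(1-t)f(x)+t\,f(b).
\]
Since $Q$ is convex, $h(f,t)(x)\in Q$, and it is affine in $x$, so $h(f,t)\in\Hom(P,Q)$ and $h$ is well defined. Then (i) is the identity $h(f,0)=f$; (ii) holds because $h(f,1)=\iota(f(b))\in\iota(Q)$; and (iii) holds because $c_x(b)=x$, hence $h(c_x,t)=(1-t)c_x+t\,\iota(x)=c_x$. Finally, for each fixed $t$ the map $h(-,t)$ is an affine self-map of $\Hom(P,Q)$ -- the assignment $f\mapsto\iota(f(b))$ is affine, being $\iota$ precomposed with evaluation at $b$ -- and for each fixed $f$ the path $h(f,-)$ is affine; this biaffinity is the sense in which the retraction is ``affine.''

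For a general target the same formula is applied one face of $\Pi_2$ at a time. The constant maps again define an embedding $\iota\colon|\Pi_2|\to\Hom(\Pi_1,\Pi_2)$, $x\mapsto c_x$, a homeomorphism onto the closed subspace of constant maps (inverse: evaluate at a point of $|\Pi_1|$); assume $|\Pi_2|$ connected (otherwise read the statement componentwise), so $\iota(|\Pi_2|)$ lies in one connected component $\Hom_0$. For each face $Q'\in\Pi_2$ the subspace $\Hom(\Pi_1,Q')=\{g:g(|\Pi_1|)\subseteq Q'\}$ is closed, and since $Q'$ is a convex polytope the argument of the first paragraph -- with $b$ any fixed point of $|\Pi_1|$, so that $h(g,t)(x)=(1-t)g(x)+t\,g(b)$ stays in $Q'$ by convexity -- produces a strong deformation retraction $h_{Q'}$ of $\Hom(\Pi_1,Q')$ onto $\iota(Q')$. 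The formula for $h_{Q'}$ does not depend on $Q'$, so on an overlap $\Hom(\Pi_1,Q')\cap\Hom(\Pi_1,Q'')=\Hom(\Pi_1,Q'\cap Q'')$ the retractions agree; by the pasting lemma (the cover is finite and closed) they glue to a strong deformation retraction, again biaffine, of $X:=\bigcup_{Q'\in\Pi_2}\Hom(\Pi_1,Q')$ onto $\bigcup_{Q'}\iota(Q')=\iota(|\Pi_2|)$. Since $X$ is closed, connected, and contained in $\Hom_0$, we are done as soon as $X=\Hom_0$; this holds, for instance, under the incommensurability phenomenon of Section~\ref{Topological-computation}, and whenever $\Pi_1$ or $\Pi_2$ is a single polytope (then every affine map factors through a single face of the target).

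The main obstacle is that in general $X$ is a \emph{proper} subspace of $\Hom_0$: a map may lie in the component of the constants and still spread over several faces of $\Pi_2$ -- this already occurs for the identity self-map of a ``cone-like'' complex such as the tripod, which is joined to a constant by scaling toward the apex but factors through no single branch. For such a $g$ one contracts not toward the image of a vertex but toward $g(b)$ for a point $b\in|\Pi_1|$ chosen so that, for every face $p\in\Pi_1$, the segment joining any $g(x)$, $x\in p$, to $g(b)$ stays inside the carrier of $g(p)$ in $\Pi_2$ (for a cone-like source $b$ is the apex; for a single-polytope source any $b$ works). The content of the proof is to make this choice continuously in $g$ over all of $\Hom_0$ -- equivalently, to show these per-face contractions fit together -- and to check that every intermediate map remains in $\Hom(\Pi_1,\Pi_2)$; this is carried out by the combinatorial bookkeeping of the carriers of the images of the faces of $\Pi_1$. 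Once that is in place, closedness of the pieces and the pasting lemma finish the argument, and since every local retraction has the biaffine form above, so does the global one. I expect this carrier-bookkeeping, rather than any isolated computation, to be the crux.
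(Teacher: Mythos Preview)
Your approach is the paper's: embed $|\Pi_2|$ as the constant maps and contract each $\Hom(\Pi_1,Q')$, $Q'\in\Pi_2$, onto $\iota(Q')$ by a straight-line homotopy toward a point of the image, then glue. The only difference is the choice of center: the paper contracts toward the barycenter of $g(|\Pi_1|)$, whereas you contract toward $g(b)$ for a fixed $b\in|\Pi_1|$. Your choice is the better one for the ``in particular'' clause, since $f\mapsto\iota(f(b))$ is visibly affine in $f$, whereas the dependence of the barycenter of $f(P)$ on $f$ is not. Your reading of ``affine map $h$'' as biaffine is also the correct one---a jointly affine $h$ satisfying (i)--(iii) cannot exist when $\dim P\ge1$, since $h(f,t)=f+tv$ would have to translate all of $\Hom(P,Q)$ into the lower-dimensional $\iota(Q)$.

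For general complexes you go further than the paper. The paper offers no proof of Proposition~\ref{deformation-retract} beyond the sentence ``the argument we used above\ldots works for arbitrary polytopal complexes,'' and that argument, taken literally, only retracts the closed set $X=\bigcup_{Q'\in\Pi_2}\Hom(\Pi_1,Q')$ onto $\iota(|\Pi_2|)$. You correctly observe that $X$ can be a \emph{proper} subset of the connected component $\Hom_0$ of the constants---your tripod example is valid: the identity on the tripod lies in $\Hom_0$ (scale toward the apex) but factors through no single edge. So the obstacle you isolate is real, and the paper glosses over it. Your proposed remedy (choose the contraction point adaptively so that each straight-line path stays within the carrier of the image) is the right idea and is exactly what the tripod case uses, but you have not carried out the ``carrier bookkeeping'' you flag as the crux; neither has the paper. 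In short, your single-polytope proof is complete and matches the paper, and for general complexes your write-up is more honest about what remains to be done.
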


\section{Coning and tensor product}\label{Coning}

Polyhedral complexes and affine maps form a category. Denote by $\Poly$ and $\Cones$ the subcategories of polytopal and conical complexes and their affine maps, respectively.

Let $\Pi$ be a polytopal complex. For $p\in\Pi$ let $P_p\subset E_p$ be the ambient vector space. Then the cones $\RR_+(P_p,1)\subset E_p\oplus\RR$, $p\in\Pi$, assemble into a conical complex which we denote by $\C(\Pi)$. The complex $\Pi$ can be thought of as the `cross section of $\C(\Pi)$ at height 1'. In particular, we can assume $|\Pi|\subset|\C(\Pi)|$. Every affine map between two polytopal complexes $f:\Pi_1\to\Pi_2$ extends uniquely to an affine map $\C(\Pi_1)\to\C(\Pi_2)$ and we get the \emph{coning} (or, \emph{homogenization}) functor:
$$
\C:\Poly\to\Cones.
$$

\begin{example}
\begin{enumerate}[{\rm (a)}]
\item A \emph{fan} in the sense of toric geometry (\cite[Ch.10]{Kripo}, \cite[Ch.3]{Toric-varieties}) is a conical complex. However, not all conical complexes are affine isomorphic to fans. Such an example is given, for instance, by $\C(\Pi_c)$, where $\Pi_c$ is as in Example \ref{complexes-examples}.
\item There are conical complexes not affine-isomorphic to $\C(\Pi)$ for $\Pi$ polytopal. One can even find such examples among fans; see \cite[Exercise 1.24]{Kripo}.
\item \emph{Projective fans}, i.e., those defining projective toric varieties, are affine-isomorphic to $\C(\Pi)$ for $\Pi$ polytopal. However, there are many non-projective fans which are affine-isomorphic to $\C(\Pi)$, $\Pi$ polytopal. For instance, one easily shows that every simplicial fan, whether or not projective, is affine-isomorphic to a projective simplicial fan, whereas in high dimensions the projective simplicial fans constitute a tiny fraction of all complete simplicial fans \cite{Pfeifle}.
\end{enumerate}
\end{example}

We want to introduce a natural tensor product of polytopal complexes, extending the notion for single polytopes. We do this by first introducing the tensor product for conical complexes and then descending to the polytopal case through the coning functor.

Let $\Pi_1$ and $\Pi_2$ be conical complexes and let $p_1,p_2\in\Pi_1$ and $q_1,q_2\in\Pi_2$ be faces. For the cones
\begin{align*}
C_i=P_{p_i},\qquad D_i=P_{q_i},\qquad&C'_i=\pi^{-1}_{p_i}(p_1\cap p_2)\subset C_i,\\
&D_i'=\pi^{-1}_{q_i}(q_1\cap q_2)\subset D_i,\\
&\quad\qquad\qquad\qquad\qquad\qquad i=1,2,
\end{align*}
Theorem \ref{folklore}(c) yields the face subcones
$$
C_i'\otimes D_i'\subset C_i\otimes D_i,\qquad i=1,2,
$$
both isomorphic to the cone
$$
\pi^{-1}_{p_1\cap p_2}(p_1\cap p_2)\otimes \pi^{-1}_{q_1\cap q_2}(q_1\cap q_2).
$$
Moreover, $C_1'\otimes D_1'$ is the largest of the faces of $C_1\otimes D_1$ of type $C'\otimes D'$, where
$$
C'=\pi_{p_1}^{-1}(p'),\quad D'=\pi_{q_1}^{-1}(q'),\quad p'\subset p_2,\quad q'\subset q_2,\quad p'\in\Pi_1,\quad q'\in\Pi;
$$
similarly for $C_2'\otimes D'_2$. It follows that the affine maps
\begin{align*}
{\big\{}
\xymatrix{
P_{p'}\otimes P_{q'}\ar[rrrr]^{(\pi^{-1}_p\circ\pi_{p'})\otimes(\pi^{-1}_q\circ\pi_{q'})}&&&&P_p\otimes P_q
}
\quad :\quad &p'\subset p,\qquad q'\subset q,\\
&p',p\in\Pi_1,\ q,q'\in\Pi_2{\big\}}
\end{align*}
glue together the tensor product cones in the way described in Definition \ref{polyhedral}. Consequently, the mentioned system of cones and affine maps can be augmented to a conical complex by adding the missing face cones of the tensor products, together with their face embeddings. (We do not know much of the missing face cones; see Theorem \ref{Basic}(c).) In order to have a full blown conical complex, we also need an abstract support space, built out of bijective images of the cones $P_p\otimes P_q$, and a compatible system of gluing bijections. We call the resulting conical complex the \emph{tensor product} of $\Pi_1$ and $\Pi_2$ and denote it by $\Pi_1\otimes\Pi_2$.

The facets and extremal rays of the complex $\Pi_1\otimes\Pi_1$ are naturally labeled by the symbols $p\otimes q$ where $p\in\Pi_1$ and $q\in\Pi_2$ are facets and rays, respectively. For the extremal rays here one uses Theorem \ref{folklore}(b). Consequently, if $\Pi_1'$ and $\Pi'_2$ are polytopal complexes then the facets (extremal rays) of the conical complex $\C(\Pi'_1)\otimes\C(\Pi'_2)$ can be naturally labeled by the symbols $p\otimes q$, where $p\in\Pi'_1$ and $q\in\Pi'_2$ are facets (respectively, vertices).

\medskip For two polytopes $P$ and $Q$ we have the \emph{degree map} $\deg:C(P)\otimes C(Q)\to\RR_+$, which is the linear extension of the assignment
$$
(a x,a)\otimes(b y,b)\mapsto ab,\quad x\in P,\quad y\in Q,\quad a,b\ge0.
$$
We have $P\otimes Q=\deg^{-1}(1)$ and $\C(P\otimes Q)=\C(P)\otimes\C(Q)$.

Now assume $\Pi_1$ and $\Pi_2$ are polytopal complexes. The degree map can be extended to their tensor product:
\begin{align*}
\deg_{\C}:|\C(\Pi_1)\otimes\C(\Pi_2)|\to\RR_+,\quad x\mapsto&\deg(\pi^{-1}_{p\otimes q}(x)),\\
&x\ \text{in the facet labelled by}\ p\otimes q,\\
&p\in\Pi_1\ \text{and}\ q\in\Pi_2\ \text{facets}.\\
\end{align*}

Finally, the \emph{tensor product of $\Pi_1$ and $\Pi_2$} is defined by the formula
$$
\Pi_1\otimes\Pi_2=\deg^{-1}_{\C}(1)\subset\C(\Pi_1)\otimes\C(\Pi_2).
$$
The following is immediate from the definition
\begin{equation}\label{formula-coning}
\C(\Pi_1)\otimes\C(\Pi_2)=\C(\Pi_1\otimes\Pi_2).
\end{equation}

\begin{remark}\label{Hom+}
\begin{enumerate}[{\rm (a)}]
\item For two polytopal complexes $\Pi_1$ and $\Pi_2$ we have
$$
\dim(\Pi_1\otimes\Pi_2)=\dim\Pi_1\dim\Pi_2+\dim\Pi_1+\dim\Pi_2.
$$
\item As a consequence of Theorem \ref{folklore}(j), if $\Delta_1$ and $\Delta_2$ are simplicial complexes then $\Delta_1\otimes\Delta_2$ is also a simplicial complex. If $\Gamma_1$ and $\Gamma_2$ are finite simple graphs, viewed as one-dimensional simplicial complexes, then $\Gamma_1\otimes\Gamma_2$ is a subcomplex of Babson-Kozlov's simplicial complex $\Hom_+(\Gamma_1,\Gamma_2)$, introduced in \cite{Lovasz-conjecture}.
\end{enumerate}
\end{remark}

\medskip Another and more straightforward construction for polytopal and conical complexes is their direct \emph{product} $\Pi_1\times\Pi_2$. It consists of the sets
$p\times q$ and maps $\pi_p\times\pi_q:P_p\times P_q\to p\times q$. That $\Pi_1\times\Pi_2$ is a genuine complex follows from the fact that one has total control over the faces of the direct product of polytopes.

For three polytopal or conical complexes $\Pi_1,\Pi_2,\Pi_3$, one has the following natural bijections of sets:
\begin{equation}\label{direct-product}
\begin{aligned}
&\Hom(\Pi_1\times\Pi_2,\Pi_3)\cong\Hom(\Pi_1,\Pi_3)\times\Hom(\Pi_2,\Pi_3),\\
&\Hom(\Pi_1,\Pi_2\times\Pi_3)\cong\Hom(\Pi_1,\Pi_2)\times\Hom(\Pi_1,\Pi_3).\\
\end{aligned}
\end{equation}

\medskip Let $\Pi_1$ and $\Pi_2$ be conical complexes and consider the map
\begin{align*}
&\Pi_1\times\Pi_2\Longrightarrow\Pi_1\otimes\Pi_2,\\
&\qquad\qquad\qquad(x,y)\mapsto\pi_{p\otimes q}(\pi^{-1}_p(x)\otimes\pi^{-1}_q(y)),\quad x\in p,\quad y\in q,\\
&\qquad\qquad\qquad\quad\qquad\qquad\qquad\qquad\qquad p\in\Pi_1\ \text{and}\ q\in\Pi_2\ \text{facets}.\\
\end{align*}

When $\Pi_1$ and $\Pi_2$ are polytopal complexes, the image of $|\Pi_1\times\Pi_2|$ under this map is in  $|\Pi_1\otimes\Pi_2|$. So we get a map $|\Pi_1\times\Pi_2|\to|\Pi_1\otimes\Pi_2|$, which will be denoted by the same $\Pi_1\times\Pi_2\Longrightarrow\Pi_1\otimes\Pi_2$.

In either case, conical or polytopal, the maps $\Pi_1\times\Pi_2\Longrightarrow\Pi_1\otimes\Pi_2$ are \emph{bi-affine} in the following sense: for any $x\in|\Pi_1|$ and $y\in|\Pi_2|$ the restrictions $\{x\}\times|\Pi_2|\to|\Pi_1\otimes\Pi_2|$ and $|\Pi_1|\times\{y\}\to|\Pi_1\otimes\Pi_2|$ are affine on the faces of $\Pi_1$ and $\Pi_2$, respectively.

\begin{lemma}\label{universal-biaffine}
Let $\Pi_1,\Pi_2,\Pi_3$ be either polytopal or conical complexes. Then the biaffine map $\Pi_1\times\Pi_2\Longrightarrow\Pi_1\otimes\Pi_2$ solves the following universal problem: any biaffine map $\Pi_1\times\Pi_2\Longrightarrow\Pi_3$ passes through a unique affine map $\phi$, making the following diagram commute
$$
\xymatrix{
\Pi_1\times\Pi_2\ar@{=>}[r]\ar@{=>}[rd]_f&\Pi_1\otimes\Pi_2\ar@{.>}[d]_{\circlearrowright\ \ }^{\exists!\phi}\\
&\Pi_3
}\ .
$$
Equivalently, we have a natural bijection of sets
\begin{equation}\label{tensor-hom}
\Hom(\Pi_1\otimes\Pi_2,\Pi_3)\cong\Hom(\Pi_1,\Hom(\Pi_2,\Pi_3)).
\end{equation}
In particular, the pairs of functors
\begin{align*}
&\otimes,\Hom:\Cones\times\Cones\to\Sets,\\
&\otimes,\Hom:\Poly\times\Poly\to\Sets
\end{align*}
form pairs of left and right adjoint functors.
\end{lemma}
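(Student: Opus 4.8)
The plan is to verify the universal property directly and then note that the adjunction is a formal consequence. First I would establish the forward direction: given a biaffine map $f:\Pi_1\times\Pi_2\Rightarrow\Pi_3$, I construct $\phi:\Pi_1\otimes\Pi_2\to\Pi_3$ facet by facet. Fix facets $p\in\Pi_1$, $q\in\Pi_2$; by bi-affinity the restriction $f|_{P_p\times P_q}$ is a biaffine map of single polytopes (or cones), so Theorem \ref{folklore}(d) — in the conical case — or the tensor-hom bijection for single polytopes coming from $\C(P\otimes Q)=\C(P)\otimes\C(Q)$ and Theorem \ref{folklore}(d,i) — in the polytopal case — yields a unique affine $\phi_{p,q}:P_p\otimes P_q\to P_r$ (where $r\in\Pi_3$ is the facet receiving $f(P_p\times P_q)$) factoring $f|_{P_p\times P_q}$ through the canonical biaffine map $P_p\times P_q\Rightarrow P_p\otimes P_q$. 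Transporting through the gluing bijections $\pi_{p\otimes q}$, this defines $\phi$ on the facet of $\Pi_1\otimes\Pi_2$ labelled by $p\otimes q$.

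The crux is showing these facetwise pieces agree on overlaps and thereby assemble into a genuine affine map of complexes. Here I would use the description of the faces of $\Pi_1\otimes\Pi_2$ recalled just before the lemma: the facet labelled $p\otimes q$ meets the facet labelled $p'\otimes q'$ in a face built from $\C'_1\otimes D'_1\subset C_1\otimes D_1$, i.e. in a subcone of the form $P_{p''}\otimes P_{q''}$ together with the compatible face embeddings $(\pi_p^{-1}\circ\pi_{p''})\otimes(\pi_q^{-1}\circ\pi_{q''})$. On such an overlap both $\phi_{p,q}$ and $\phi_{p',q'}$ restrict to the affine map classified by $f|_{P_{p''}\times P_{q''}}$ — because the canonical biaffine maps are compatible with face embeddings (this is exactly the bi-affinity of $\Pi_1\times\Pi_2\Rightarrow\Pi_1\otimes\Pi_2$ restated), and uniqueness in the single-polytope/cone universal property forces the two restrictions to coincide. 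This glued $\phi$ is affine on each face by construction, and it is the unique such map since $\phi$ is determined on each facet by $f$ via the single-object universal property, and the facets cover $|\Pi_1\otimes\Pi_2|$.

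For the bijection \eqref{tensor-hom}, I would observe that an element of $\Hom(\Pi_1,\Hom(\Pi_2,\Pi_3))$ is, unwinding the compact-open topology and the complex structure on $\Hom(\Pi_2,\Pi_3)$ (to be supplied by Theorem \ref{Existence-theorem}, which we may invoke), precisely a map $\Pi_1\times\Pi_2\to\Pi_3$ that is affine in each variable separately — a biaffine map — and conversely; the correspondence $\alpha\leftrightarrow f$, $f(x,y)=(\alpha(x))(y)$, is manifestly bijective. Composing with the universal property just proved gives $\Hom(\Pi_1\otimes\Pi_2,\Pi_3)\cong\{\text{biaffine maps}\}\cong\Hom(\Pi_1,\Hom(\Pi_2,\Pi_3))$, naturally in all three arguments since every step is natural. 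The adjunction statement is then just the definition of $\otimes\dashv\Hom$ read off from this natural isomorphism.

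The main obstacle I anticipate is the gluing-compatibility step: one must be careful that the face structure of a tensor product $P\otimes Q$ is more complicated than that of a direct product (Theorem \ref{folklore}(c) warns of "many other faces"), so that $\phi$ is only controlled on the face subcones of the form $C'\otimes D'$, and one needs that these suffice to pin down $\phi$ on the overlaps — which they do, since the overlaps of facets in $\Pi_1\otimes\Pi_2$ were defined to be exactly of this type. Handling the conical and polytopal cases uniformly, via the coning functor and \eqref{formula-coning}, should let the polytopal case be deduced from the conical one rather than argued separately.
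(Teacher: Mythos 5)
Your proposal is correct and follows essentially the same route as the paper's proof: reduce to the single-cone universal property (Theorem \ref{folklore}(d)) facet by facet, patch the resulting maps using the face structure of $\Pi_1\otimes\Pi_2$, let uniqueness descend from the facets, and obtain the polytopal case from the conical one via the coning formula (\ref{formula-coning}). You simply spell out the patching and the biaffine-versus-$\Hom(\Pi_1,\Hom(\Pi_2,\Pi_3))$ identification in more detail than the paper does.
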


\begin{proof}
When the complexes are conical, the corresponding linear algebra fact, applied to the facets of $\Pi_1$ and $\Pi_2$, yield affine maps from the facets of $\Pi_1\otimes\Pi_2$ to $\Pi_3$ and these maps patch together, yielding the desired map $\phi$. The uniqueness part also descends to the corresponding property for facets.  In view of the formula (\ref{formula-coning}), the polytopal case is a specialization of the conical one.
\end{proof}

\section{The hom-complex}\label{Hom-complex}

The main result of this section is

\begin{theorem}\label{Existence-theorem}
Let $\Pi_1$ and $\Pi_2$ be polytopal (conical) complexes. Then the space $\Hom(\Pi_1,\Pi_2)$ carries a polytopal (respectively, conical) complex structure, which can be defined algorithmically.
\end{theorem}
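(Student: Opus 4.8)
The plan is to stratify $\Hom(\Pi_1,\Pi_2)$ into finitely many locally closed ``combinatorial cells'', to realize the closure of each cell as a polytope (in the conical case, a cone) which arises as the limit of an explicit finite diagram in $\Pol$ (resp.\ in the category of cones), and finally to check that these closed cells together with all their faces satisfy the axioms of Definition~\ref{polyhedral}. I will describe the polytopal case; the conical case is formally identical once Theorem~\ref{folklore}(a) and the (omitted but straightforward) cone analogue of Theorem~\ref{folklore}(g) replace their polytopal counterparts, and it may alternatively be deduced from the polytopal case via the fully faithful coning functor $\C$ of Section~\ref{Coning}.

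\emph{The cells.} For $f\in\Hom(\Pi_1,\Pi_2)$ and a $0$-face $v\in\Pi_1$ let $\lambda_f(v)\in\Pi_2$ be the carrier of $f(v)$, i.e.\ the smallest face of $\Pi_2$ containing it. Call a function $\lambda$ from the finitely many $0$-faces of $\Pi_1$ to the faces of $\Pi_2$ \emph{admissible} if $\lambda=\lambda_f$ for some $f$, and set
$$
C_\lambda=\{f:\lambda_f=\lambda\},\qquad \overline{C_\lambda}=\{f:f(v)\in\lambda(v)\ \text{for every}\ 0\text{-face}\ v\}.
$$
Then $\Hom(\Pi_1,\Pi_2)=\bigsqcup_\lambda C_\lambda$ and $\overline{C_\lambda}=\bigsqcup_{\lambda'\leq\lambda}C_{\lambda'}$, where the disjoint unions run over admissible functions and $\lambda'\leq\lambda$ means $\lambda'(v)\subseteq\lambda(v)$ for all $v$. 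Admissibility forces, for each $p\in\Pi_1$, the existence of a smallest face $\widehat Q_p\in\Pi_2$ with $\bigcup_{v\in\vertex(p)}\lambda(v)\subseteq\widehat Q_p$, and $p'\subseteq p$ yields $\widehat Q_{p'}\subseteq\widehat Q_p$. When $\Pi_1$ and $\Pi_2$ are single polytopes $P$ and $Q$, the sets $\overline{C_\lambda}=\{f\in\Hom(P,Q):f(v)\in\lambda(v)\}$ are, by Theorem~\ref{folklore}(g), precisely the faces of the hom-polytope $\Hom(P,Q)$; this is both the base case and the source of the cell indexing.

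\emph{Each closed cell is a polytope.} Restriction of affine maps to the faces of $\Pi_1$ identifies $\overline{C_\lambda}$ with $\lim_{\leftarrow}\cD_\lambda$ for the finite diagram $\cD_\lambda$ in $\Pol$ whose objects are the sets
$$
A_p=\{g\in\Hom(P_p,P_{\widehat Q_p}):g(v)\in\lambda(v)\ \text{for all}\ 0\text{-faces}\ v\leq p\},\qquad p\in\Pi_1,
$$
together with the auxiliary polytopes $\Hom(P_{p'},P_{\widehat Q_p})$ for $p'\subseteq p$, and whose morphisms are the domain-restriction maps $A_p\to\Hom(P_{p'},P_{\widehat Q_p})$ and the post-composition maps $A_{p'}\hookrightarrow\Hom(P_{p'},P_{\widehat Q_{p'}})\hookrightarrow\Hom(P_{p'},P_{\widehat Q_p})$ induced by the face inclusions $P_{\widehat Q_{p'}}\hookrightarrow P_{\widehat Q_p}$; all of these are affine maps of polytopes, and by Theorem~\ref{folklore}(e),(g) each $A_p$ is a face of the polytope $\Hom(P_p,P_{\widehat Q_p})$, hence a polytope (if some $A_p$ is empty, the cell is empty and $\lambda$ is dropped). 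A point of $\lim_{\leftarrow}\cD_\lambda$ is exactly a family $(g_p)_{p\in\Pi_1}$ compatible under restriction to faces, i.e.\ exactly an element of $\overline{C_\lambda}$; and by the description of finite limits in $\Pol$ recalled in \S\ref{Limits}, $\lim_{\leftarrow}\cD_\lambda$ is a polytope inside the corresponding limit affine space in $\prod_p\Aff(P_p,P_{\widehat Q_p})$. (That this polytope topology agrees with the compact--open topology of Definition~\ref{complex-maps} is routine.)

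\emph{Assembling the complex, and the main obstacle.} Everything now hinges on the claim that the face poset of the polytope $\lim_{\leftarrow}\cD_\lambda$ is exactly $\{\overline{C_{\lambda'}}:\lambda'\leq\lambda\ \text{admissible},\ \overline{C_{\lambda'}}\neq\emptyset\}$. To prove it one observes that $\lim_{\leftarrow}\cD_\lambda\subseteq\prod_p\Hom(P_p,P_{\widehat Q_p})$ is cut out by the linear ``matching'' equations $g_p|_{p'}=g_{p'}$ (for $p'\subseteq p$) together with, for each $p$, the facet inequalities of $A_p$, which by Theorem~\ref{folklore}(g) each say that $g_p(w)$ lies on a prescribed side of a facet of $P_{\widehat Q_p}$ for a vertex $w$ of $P_p$; turning a subset of these inequalities into equalities shrinks the per-vertex target $\lambda(v)$ (for the $0$-face $v$ carried by $w$) to a smaller face $G_{p,w}\in\Pi_2$. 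The matching equations identify the values $g_p(v)$ over all faces $p$ through a fixed $0$-face $v$, so on the limit the effective target at $v$ becomes $\lambda'(v):=\bigcap_{p\ni v}G_{p,w}$, again a face of $\Pi_2$ by Definition~\ref{polyhedral}(b); hence the face in question equals $\overline{C_{\lambda'}}$ with $\lambda'\leq\lambda$, and every such closed cell arises this way. Granting this, $\overline{C_\lambda}\cap\overline{C_{\lambda'}}=\overline{C_{\lambda\wedge\lambda'}}$ with $(\lambda\wedge\lambda')(v)=\lambda(v)\cap\lambda'(v)$ is a common face of both, so the nonempty $\overline{C_\lambda}$ -- indexed without redundancy by the \emph{reduced} admissible functions $\lambda$ -- and all their faces, with structural maps the identifications $\overline{C_\lambda}\cong\lim_{\leftarrow}\cD_\lambda$, constitute a polytopal complex supported on $\Hom(\Pi_1,\Pi_2)$. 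The algorithm is then the obvious one: enumerate the finitely many functions $\lambda$, discard the inadmissible or empty ones, compute each $\lim_{\leftarrow}\cD_\lambda$ (an intersection of a product of hom-polytopes with a linear subspace, its facets read off from Theorem~\ref{folklore}(g)), and record the incidences $\overline{C_{\lambda'}}\subseteq\overline{C_\lambda}$. The hard part is the displayed claim about faces of the limit: faces of a limit of polytopes need not be limits of sub-diagrams, and a face of the ambient product $\prod_p\Hom(P_p,P_{\widehat Q_p})$ corresponds to data $(G_{p,w})$ that may be mutually inconsistent across faces sharing a vertex -- it is precisely the matching equations that make this inconsistency collapse on the limit, and carrying the bookkeeping (admissible versus reduced labelings, so that cells are counted once) through cleanly is where the genuine work lies; the rest is supplied by \S\ref{Limits} and Theorem~\ref{folklore}(e)--(g).
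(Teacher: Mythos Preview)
Your proof is correct and follows essentially the same strategy as the paper: stratify $\Hom(\Pi_1,\Pi_2)$ by combinatorial type, realize each closed stratum as the limit of a finite diagram in $\Pol$ (your $\lim_\leftarrow\cD_\lambda$ is the paper's $[\Psi]$), and verify the patching axioms of Definition~\ref{polyhedral}. The only difference is labeling---the paper indexes cells by full monotone maps $\Psi:\Lambda_{\Pi_1}\to\Lambda_{\Pi_2}$ using the carrier of each face's barycenter, while you use vertex data $\lambda$, a reduction the paper itself records in Section~\ref{Optimal} as the passage to ``continuous'' monotone maps---and on the key step that every face of the limit polytope is again a cell you actually supply more detail (via Theorem~\ref{folklore}(g) and the matching equations) than the paper, which asserts ``$[\Psi_1]$ is a face of $[\Psi_2]$ when $\Psi_1\le\Psi_2$'' and ``$[\Psi_1]\cap[\Psi_2]=[\Psi_1\wedge\Psi_2]$'' without further argument.
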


\begin{proof}
We consider only the polytopal case as the argument for conical complexes is verbatim the same.

For a polytopal complex $\Pi$ we let $\Lambda_\Pi$ denote the poset of symbols $\lambda_p$, $p\in\Pi$, ordered by $\lambda_p\le\lambda_q$ if and only if $p\subset q$. We view the set of monotone maps $\Lambda_{\Pi_1}\to\Lambda_{\Pi_2}$ as a poset with respect to the point-wise comparison.

Next we introduce the following correspondences between the monotone maps $\Psi:\Lambda_{\Pi_1}\to\Lambda_{\Pi_2}$ and elements of $\Hom(\Pi_1,\Pi_2)$.

\medskip\noindent(i) To $f\in\Hom(\Pi_1,\Pi_2)$ we associate the following monotone map $\Psi_f$. For $p_1\in\Pi_1$ let $\beta(P_{p_1})$ be the barycenter of $P_{p_1}=\pi_p^{-1}(p_1)$. There is a unique $p_2\in\Pi_2$ such that $(\pi_{p_2}^{-1}\circ f\circ\pi_{p_1})(\beta(P_{p_1}))\in\int(P_{p_2})$. We set $\Psi_f(\lambda_{p_1})=\lambda_{p_2}$. That $\Psi_f$ is a monotone map is straightforward.

\medskip\noindent(ii) Fix a monotone map $\Psi:\Lambda_{\Pi_1}\to\Lambda_{\Pi_2}$. We want to define a subset $[\Psi]\subset\Hom(\Pi_1,\Pi_2)$. This will be done in several steps.

For any pair $p\subset q$ in $\Pi_1$ we have the embedding
$$
\iota_{pq}:\Aff(P_p)\to\Aff(P_q),
$$
which is the affine extension of the map
$$
P_p\to P_q,\quad x\mapsto(\pi_q^{-1}\circ\pi_p)(x),\quad x\in P_p.
$$
For any $p\in\Pi_1$ we have the affine space
\begin{align*}
\AA_p(\Psi)=\bigcap_{
\tiny{
\begin{matrix}
r\in\Pi_1\\
r\subset p\\
\end{matrix}
}}
\big\{\phi\in\Aff\big(P_p,&\ P_{\Psi(p)}\big)\ :\\
&(\phi\circ\iota_{rp})\big(\Aff(P_r)\big)\subset\iota_{\Psi(r)\Psi(p)}\big(\Aff(P_{\Psi(r)})\big)\big\},
\end{align*}

\medskip\noindent(The set $\AA_p(\Psi)$ may well be empty.)

Then we form the diagram of affine maps and affine spaces
$$
\cD_{\Aff}(\Psi)=\{\rest.:\AA_q(\Psi)\to\AA_p(\Psi)\ :\ p\subset q,\quad p,q\subset\Pi_1\},
$$
where the maps `$\rest.$' are defined by the commutativity re\-qu\-i\-re\-ment for the squa\-res
\begin{equation}\label{numbered-square}
\xymatrix{
\Aff(P_q)\ar[rr]^\phi&&\Aff(P_{\Psi(q)})\\
&&\\
\Aff(P_p)\ar[uu]^{\iota_{pq}}\ar[rr]_{\rest.(\phi)}&&\Aff(P_{\Psi(p)})\ar[uu]_{\iota_{\Psi(p)\Psi(q)}}\\
}.
\end{equation}

\medskip Consider the diagram of polytopes and affine maps
$$
\cD_{\pol}(\Psi)=\{\rest.:R_q\to R_p\ :\ p\subset q,\quad p,q\subset\Pi_1\},
$$
where:
\begin{enumerate}[{\rm$\centerdot$}]
\item
$R_r=\AA_r(\Psi)\cap\Hom(P_r,P_{\Psi(r)})$ for $r\in\Pi_1$, the intersection being considered in $\Aff\big(P_p,P_{\Psi(p)}\big)$,
\item the maps are determined by the commutativity condition for the squares
$$
\xymatrix{
P_q\ar[rr]^\phi&&P_{\Psi(q)}\\
&&\\
P_p\ar[uu]^{\iota_{pq}}\ar[rr]_{\rest.(\phi)}&&P_{\Psi(p)}\ar[uu]_{\iota_{\Psi(p)\Psi(q)}}\\
}.
$$
\end{enumerate}
The existence of the squares above follows from (\ref{numbered-square}) and the equalities $P_{\Psi(p)}=P_{\Psi(q)}\cap\Aff(P_{\Psi(p)})$.

Finally, the polytope $[\Psi]$ is defined by
$$
[\Psi]:=\lim_\leftarrow\mathcal D_{\pol}(\Psi)\subset\lim_\leftarrow\mathcal D_{\Aff}(\Psi).
$$
The set $[\Psi]$ is naturally thought of as a subset of $\Hom(\Pi_1,\Pi_2)$: an element $f\in[\Psi]$ means a collection of affine maps $P_p\to P_{\Psi(p)}$, compatible with the structural maps $\pi$-s in $\Pi_1$ and $\Pi_2$. (It is possible that $[\Psi]=\emptyset$ for some monotone maps $\Psi$, even if $\lim_\leftarrow\mathcal D_{\Aff}(\Psi)\not=\emptyset$.)

\medskip\noindent\emph{Notice.} Formally speaking, the set $[\Psi]$ embeds into $\Hom(\Pi_1,\Pi_2)$ via the structural maps $\pi$. But  in order not to overload notation, we think of $[\Psi]$ as its bijective image under this embedding. A more explicit polytopal description of this limit is given in Section \ref{Inverse}.

\medskip Next we observe that the polytopes $[\Psi]$ cover the whole set $\Hom(\Pi_1,\Pi_2)$ and they patch together, forming a polytopal complex. The first claim follows from the equality $f\in[\Psi_f]$ for every $f\in\Hom(\Pi_1,\Pi_2)$. The second claim follows from the equality $[\Psi_1]\cap[\Psi_2]=[\Psi_1\wedge\Psi_2]$ for any two monotone maps $\Psi_1,\Psi_2:\Lambda_{\Pi_1}\to\Lambda_{\Pi_2}$ and the fact that $[\Psi_1]$ is a face of $[\Psi_2]$ when $\Psi_1\le\Psi_2$.

In order describe the face poset structure of $\Hom(\Pi_1,\Pi_2)$, we observe that for every $f\in\Hom(\Pi_1,\Pi_2)$ the map $\Psi_f$ is the smallest among the monotone maps $\Psi:\Lambda_{\Pi_1}\to\Lambda_{\Pi_2}$ for which $[\Psi]=[\Psi_f]$. So the poset in question is the poset of monotone maps
$$
\Lambda_{\Pi_1,\Pi_2}:=\{\Psi_f\ :\ f\in\Hom(\Pi_1,\Pi_2)\}.
$$

Detailed analysis of the algorithmic aspects of the constructions above is deferred to Section \ref{Implementing}.
\end{proof}

As a combined effect of Lemma \ref{universal-biaffine} and Theorem \ref{Existence-theorem}, we have

\begin{corollary}\label{conjunction}
The bijections of sets (\ref{direct-product}) and (\ref{tensor-hom}) are affine isomorphisms of complexes. Both categories $\Poly$ and $\Cones$ are symmetric monoidal closed categories, enriched over themselves.
\end{corollary}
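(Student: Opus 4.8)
The plan is to first promote the two bijections of sets (\ref{direct-product}) and (\ref{tensor-hom}) to affine isomorphisms of complexes, and then read off the categorical assertions, which become formal once the bijections are understood. For the first task it suffices to check, for each bijection, that it \emph{and its set-theoretic inverse} are morphisms in $\Poly$ (resp.\ $\Cones$) in the sense of Definition \ref{complex-maps} --- a bijective morphism whose inverse is again a morphism is an isomorphism. By Theorem \ref{Existence-theorem} the hom-complexes here are assembled from the cells $[\Psi]$, so everything comes down to what these bijections do to the cells. I would dispose of (\ref{direct-product}) first, as it is the more elementary: its second line expresses that $\Pi_2\times\Pi_3$ is the categorical product (a cell-by-cell check, using that an affine map into a product of polytopes is a pair of affine maps and that $\Lambda_{\Pi_2\times\Pi_3}=\Lambda_{\Pi_2}\times\Lambda_{\Pi_3}$), and its first line is handled by the same method using the explicit description of the faces of a direct product of polytopes as products of faces. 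The substantive case is (\ref{tensor-hom}).

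Write $\Phi$ for the bijection (\ref{tensor-hom}), $\Phi(\alpha)(x)(y)=\alpha(x\otimes y)$. I want to show that $\Phi$, and likewise $\Phi^{-1}$, carries every cell of the source complex affinely \emph{onto} a cell of the target; this makes both $\Phi$ and $\Phi^{-1}$ morphisms in $\Poly$, hence $\Phi$ an isomorphism. The affineness is the concrete half and rests on Theorem \ref{folklore}: describing an element of the source over the facets of $\Pi_1\otimes\Pi_2$ and an element of the target over the facets of $\Pi_1$, one sees that facet-by-facet $\Phi$ is induced by the linear bijection $\Theta$ of part (d) --- equivalently, by the affine isomorphism of part (i), or via coning and (\ref{formula-coning}) --- so $\Phi$ is induced by a single linear (affine) map on the ambient spaces of the cells; in particular it is affine on every cell $[\Psi]$, taking it to the polytope $\Theta([\Psi])$. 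The delicate half is that $\Theta([\Psi])$ is again a \emph{cell} of $\Hom(\Pi_1,\Hom(\Pi_2,\Pi_3))$ --- equivalently, that the smallest cell of the target containing $\Phi(f)$ depends only on the smallest cell of the source containing $f$. For this I would unwind the recipe for the monotone maps $\Psi_f$ from the proof of Theorem \ref{Existence-theorem}, using Theorem \ref{folklore}(c) --- the faces $C'\otimes D'\subset C\otimes D$ --- to express, for each face $\pi$ of $\Pi_1\otimes\Pi_2$, the face of $\Pi_3$ whose relative interior contains the image of the barycenter of $P_\pi$, in terms of data attached to $\Pi_1$ and to the cell poset $\Lambda_{\Pi_2,\Pi_3}$ of $\Hom(\Pi_2,\Pi_3)$ (the latter being the face poset of that complex, by the same proof). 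This should yield an order isomorphism $\Lambda_{\Pi_1\otimes\Pi_2,\Pi_3}\cong\Lambda_{\Pi_1,\Hom(\Pi_2,\Pi_3)}$ under which $\Phi$ matches cells. The conical case is identical, with (d) in place of (i).

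Once the bijections are upgraded, the categorical conclusions are formal. The tensor product of complexes was built symmetrically from $P\otimes Q\cong Q\otimes P$, so $\otimes$ is symmetric; it is associative because, by Lemma \ref{universal-biaffine}, both $(\Pi_1\otimes\Pi_2)\otimes\Pi_3$ and $\Pi_1\otimes(\Pi_2\otimes\Pi_3)$ satisfy the universal property for triaffine maps out of $\Pi_1\times\Pi_2\times\Pi_3$; the unit is the one-point complex $\{\ast\}$ for $\Poly$ and the single-ray complex $\RR_+$ for $\Cones$, with $\{\ast\}\otimes\Pi\cong\Pi$ (resp.\ $\RR_+\otimes\Pi\cong\Pi$) coming from $\C(\{\ast\})=\RR_+$ and $\RR_+\otimes C=C$ (equivalently, Theorem \ref{folklore}(j) with $n=0$). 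All the associativity, unit and symmetry isomorphisms are unique solutions of the universal problem in Lemma \ref{universal-biaffine}, so the pentagon, triangle and hexagon coherences hold automatically. Closedness is exactly the upgraded (\ref{tensor-hom}): $\Hom(\Pi_2,\Pi_3)$ is an object of the category by Theorem \ref{Existence-theorem}, and the adjunction $\otimes\dashv\Hom(\Pi_2,-)$ now holds at the level of complexes. Finally, a symmetric monoidal closed category is canonically enriched over itself, the hom-objects being the internal homs $\Hom(-,-)$, the composition $\Hom(\Pi_2,\Pi_3)\otimes\Hom(\Pi_1,\Pi_2)\to\Hom(\Pi_1,\Pi_3)$ obtained from evaluation by adjointness, and the units $\{\ast\}\to\Hom(\Pi,\Pi)$ picking out identity maps; concretely, one need only note that ordinary composition of affine maps is biaffine and hence, by Lemma \ref{universal-biaffine}, a genuine morphism of complexes, which is all the enrichment axioms demand.

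The step I expect to be the main obstacle is the delicate half of the argument for (\ref{tensor-hom}): establishing the order isomorphism of cell posets $\Lambda_{\Pi_1\otimes\Pi_2,\Pi_3}\cong\Lambda_{\Pi_1,\Hom(\Pi_2,\Pi_3)}$ so that $\Phi$ carries cells onto cells, which forces one to control the not-fully-understood face structure of tensor products of cones that appears in Theorem \ref{folklore}(c). Everything downstream of the upgraded bijections is routine category theory.
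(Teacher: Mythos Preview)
Your proposal is correct and follows the same line as the paper, only far more explicitly. The paper treats the corollary as essentially self-evident: it is stated ``as a combined effect of Lemma \ref{universal-biaffine} and Theorem \ref{Existence-theorem}'', with the only additional remark being that the pentagon and hexagon coherences are inherited from the tensor product of vector spaces. No cell-by-cell verification is written out; your paragraphs on the monoidal closed and self-enriched structure simply spell out what the paper leaves to the reader.

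One comment on the obstacle you flag. You worry that matching the cell posets $\Lambda_{\Pi_1\otimes\Pi_2,\Pi_3}$ and $\Lambda_{\Pi_1,\Hom(\Pi_2,\Pi_3)}$ forces you to confront the unknown faces of $C\otimes D$ from Theorem \ref{folklore}(c). It does not. By the construction of $\Pi_1\otimes\Pi_2$ in Section \ref{Coning}, two facets $p_1\otimes q_1$ and $p_2\otimes q_2$ are glued precisely along $(p_1\cap p_2)\otimes(q_1\cap q_2)$; hence every \emph{essential} face of $\Pi_1\otimes\Pi_2$ is of the transparent form $p'\otimes q'$ with $p',q'$ essential in $\Pi_1,\Pi_2$. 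Since Section \ref{Optimal} shows that the proof of Theorem \ref{Existence-theorem} (and therefore the description of the cells $[\Psi]$ and of $\Lambda_{-,-}$) goes through using only essential faces, the mysterious faces of the individual tensor cones never enter. With this reduction your order-isomorphism argument becomes routine, and the ``delicate half'' is no harder than the affine half.
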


For the categorial terminology used above, see \cite{Enriched}. One needs the \emph{symmetry}, \emph{pentagon coherence}, and \emph{hexagon coherence} properties of the bifunctor $\otimes$. These are inherited from the similar properties of the tensor product of vector spaces. We do not delve into the definitions because the polytopal contents is contained in the first part of the corollary.

\section{The algorithm for $\Hom(\Pi_1,\Pi_2)$}\label{Implementing}

Here we discuss an algorithm for computing $\Hom(\Pi_1,\Pi_2)$, resulting from the proof of Theorem \ref{Existence-theorem}, making shortcut whenever possible. The notation throughout this section is the same as in that theorem and its proof.

We also remark that, since the order complex of the poset $\Lambda_{\Pi_1,\Pi_2}$ introduced at the end of the proof of Theorem \ref{Existence-theorem} is the barycentric subdivision of $\Hom(\Pi_1,\Pi_2)$, one can compute the integer homology $H_*(|\Hom(\Pi_1,\Pi_2)|,\ZZ)$ by computing $H_*(\Lambda_{\Pi_1,\Pi_2},\ZZ)$, for which an existing platform is \cite{homology}.

\subsection{Complexes succinctly}\label{Optimal}

In the algorithmic version of Theorem \ref{Existence-theorem} one eliminates the reference to abstract sets $p$ and works directly with polytopes. Definition \ref{polyhedral} is modeled after the definition of $CW$-complexes -- it puts the main emphasis on the support spaces and makes easier to work with in the proof of Theorem \ref{Existence-theorem}. The equivalent definition, where all polytopes and affine maps are storable as matrices and vectors, is as follows. A lattice polyhedral complex $\Pi$ consists of:
\begin{enumerate}[{\rm (a)}]
\item a finite poset $\Lambda$,
\item
a collection of nonempty polytopes $\{P_\lambda\subset\RR^{d_\lambda}\}_\Lambda$,
\item affine maps $\iota_{\lambda\mu}:P_\lambda\to Q_\mu$ whenever $\lambda\le\mu$, mapping $P_\lambda$ isomorphically onto a face of $Q_\mu$.
\end{enumerate}
Furthermore we require the following compatibility conditions:
\begin{enumerate}[{\rm (i)}]
\item  $\iota_{\lambda\lambda}={\bf1}_{P_\lambda}$ and $\iota_{\mu\nu}\circ\iota_{\lambda\mu}=\iota_{\lambda\nu}$ for $\lambda,\mu\in\Lambda$,
\item
for every element $\lambda\in\Lambda$ and each face $F$ of the polytope $P_\lambda$ there
is a unique element $\mu\in\Lambda$ such that $\mu\le\lambda$ and $\iota_{\mu\lambda}(P_\mu)=F$.
\end{enumerate}

Explicating the diagrams $\cD_{\Aff}(\Psi)$ and $\cD_{\pol}(\Psi)$ involves such procedures as forming the affine hulls of polytopes and solving systems of linear equalities. The polytopes $\Hom(P_r,P_{\Psi(r)})$ can be computed using \textsf{Polymake}.

\medskip The first essential speedup of the algorithm can be achieved by restricting to the \emph{continuous monotone maps} $\Psi:\Lambda_{\Pi_1}\to\Lambda_{\Pi_2}$, i.e., the monotone maps, satisfying the condition
\begin{align*}
\Psi(\lambda_p)=\Psi(\lambda_{q_1})\vee\cdots\vee\Psi(\lambda_{q_k}),\quad\text{where}\ \ \lambda_p&=\lambda_{q_1}\vee\cdots\vee\lambda_{q_k},\\
&q_1,\ldots,q_k\in\Pi_1,\ \ k\in\NN,\ \ p\in\Pi_1,
\end{align*}
or, equivalently,
\begin{align*}
\Psi(\lambda_p)=\bigvee_{v\in\vertex(p)}\Psi(\lambda_v),\qquad p\in\Pi_1.
\end{align*}

The second substantial speedup is based on the following observation. Call a face of a polytopal complex \emph{essential} if it is the intersection of a family of facets; e.g., the facets are essential faces. For most polytopal complexes the essential faces constitute only a small part of all faces; admittedly, this is not true when the support space is a topological manifold. Yet the essential faces often suffice for computational purposes. This is the case when one describes $\Hom(\Pi_1,\Pi_2)$ in terms of $\Pi_1$ and $\Pi_2$. More precisely, the proof of Theorem \ref{Existence-theorem} goes through if the posets $\Lambda_{\Pi_1}$ and $\Lambda_{\Pi_2}$ are changed to their \emph{essential} sub-posets, i.e., the ones which correspond to the essential faces of $\Pi_1$ and $\Pi_2$. In fact, if one requires the condition (a) in Definition \ref{polyhedral} only for the essential faces of $P_p$ and adjusts Definition \ref{complex-maps} accordingly, then one obtains a category isomorphic to $\Poly$. In particular, in the algorithm for $\Hom(\Pi_1,\Pi_2)$, one can restrict to the continuous monotone maps between the essential sub-posets of $\Lambda_{\Pi_1}$ and $\Lambda_{\Pi_2}$. In the extremal case when $\Pi_1$ and $\Pi_2$ are the faces of single polytopes, these are one-point posets and the algorithm becomes the computation of the hom-polytope.

\subsection{Limits succinctly}\label{Inverse}
Denote the essential sub-posets of $\Lambda_{\Pi_1}$ and $\Lambda_{\Pi_2}$ by $\ess(\Lambda_{\Pi_1})$ and $\ess(\Lambda_{\Pi_2})$, respectively.

Effective computation of the limit of a diagram in $\Pol$ is a challenge of independent interest -- and so is the computation of colimits! Our diagram $\cD_{\pol}(\Psi)$ is special though: it is a (covariant) functor from the opposite poset $\ess(\Lambda_{\Pi_1})^{\op}$ to $\Pol$. So the limit allows the following succinct description.

Let $\lambda_1,\ldots,\lambda_l$ be the maximal elements of $\Lambda_{\Pi_1}$, i.e., they correspond to the facets of $\Pi_1$. Assume
\begin{align*}
\{\mu_1,\ldots,\mu_m\}=\{\lambda_i\wedge\lambda_j\ :\ i\not=j\ \text{and the meet exists}\}_{i,j=1}^l.
\end{align*}
(By definition the faces of polytopal complexes are non-empty, making possible the non-existence of some of infima.)

Let $\Lambda^{\o}$ denote the sub-poset of $\ess(\Lambda_{\Pi_1})^{\op}$, consisting of the elements $\lambda_1,\ldots,\lambda_l$ and $\mu_1,\ldots,\mu_m$. Denote by $\cD_{\Aff}(\Psi)^{\o}$ and $\cD_{\pol}(\Psi)^{\o}$ the corresponding restrictions to $\Lambda^{\o}$. Then we have the equalities
$$
\lim_{\leftarrow}\cD_{\Aff}(\Psi)^{\o}=\lim_{\leftarrow}\cD_{\Aff}(\Psi)\qquad\text{and}\qquad[\Psi]=\lim_{\leftarrow}\cD_{\pol}(\Psi)^{\o}.
$$

In explicit terms, if $p_1,\ldots,p_l$ are the facets of $\Pi_1$ and $q_1,\ldots,q_m$ are the non-empty pairwise intersections of the $p_i$, then
\begin{align*}
[\Psi]=\big\{(x_1,\ldots,x_l)\ :\ \rest.(x_i)=\rest.(x_j),\quad&\quad
\xymatrix{
R_{p_i}\ar[dr]_{\rest.}&& R_{p_j}\ar[dl]^{\rest.}\\
&R_{q_k}&
},\\
&q_k=p_i\cap p_j,\quad i,j=1,\ldots,l,\ i\not=j\big\}\\
\\
&\subset R_{p_1}\times\cdots\times R_{p_l}.\\
\end{align*}
So $[\Psi]$ is the solution set to a relatively small system of linear equations.

\subsection{Euclidean complexes}\label{Euclidean-complexes} Here we explain how the determination of the hom-complex between Euclidean complexes admits a substantial simplification. This is based on the following

\begin{proposition}\label{Free-embedding}
For any Euclidean complex $\Pi$ there exists $m\in\NN$ and an embedding $\iota:|\Pi|\to\RR^m$, satisfying the conditions:
\begin{enumerate}[{\rm (a)}]
\item
$\iota\circ\pi_p:P_p\to\RR^m$ is affine for every face $p\in\Pi$,
\item for every Euclidean complex $R$ in $\RR^n$ and an affine map $f:\Pi\to R$ there is a unique affine map $\phi:\RR^m\to\RR^n$ with $f=\phi\circ\iota$.
\end{enumerate}
\end{proposition}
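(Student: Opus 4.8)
The plan is to obtain $\iota$ by \emph{dualizing}. One cannot simply send the vertices of $\Pi$ to an affinely independent configuration in some $\RR^m$: the vertices of a single face $P_p$ satisfy affine relations which any affine map $\iota\circ\pi_p$ is bound to respect. Instead I would take as the coordinate functions of $\iota$ a basis of the space of \emph{all} face-wise affine real functions on $\Pi$. Concretely, set
\[
V:=\bigl\{\psi\colon|\Pi|\to\RR\ :\ \psi\ \text{is affine on every face of}\ \Pi\bigr\},
\]
a real vector space containing the constant function $\mathbf 1$. A function on $|\Pi|$ is face-wise affine if and only if its restriction to every \emph{facet} is affine (each face lies in a facet, and an affine function restricts affinely to any subpolytope), and such a $\psi$ is determined by, and can be reassembled from, the compatible tuple of its restrictions to the facets; hence $V$ is a linear subspace of the finite-dimensional direct sum $\bigoplus_{p}\Aff(P_p,\RR)$ taken over the facets $p$ of $\Pi$, so $\dim V<\infty$. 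Put $m:=\dim V-1$, choose $\phi_1,\dots,\phi_m$ so that $\mathbf 1,\phi_1,\dots,\phi_m$ is a basis of $V$, and define $\iota(x):=(\phi_1(x),\dots,\phi_m(x))$. Property (a) is then immediate, since $\iota\circ\pi_p$ has the affine functions $\phi_i\circ\pi_p$ as its coordinates.

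The next step is to check that $\iota$ is an embedding whose image affinely spans $\RR^m$. Injectivity is exactly where the hypothesis that $\Pi$ is Euclidean is used: by definition there is a face-wise affine injection $j\colon|\Pi|\hookrightarrow\RR^k$, and its coordinate functions lie in $V=\RR\mathbf 1\oplus\RR\phi_1\oplus\cdots\oplus\RR\phi_m$; writing them as affine combinations of the $\phi_i$ exhibits $j$ as $\psi\circ\iota$ for some affine $\psi\colon\RR^m\to\RR^k$, whence $\iota$ is injective. Since $|\Pi|$ is a finite union of polytopes, hence compact, and $\RR^m$ is Hausdorff, $\iota$ is a topological embedding. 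If $\iota(|\Pi|)$ lay in a proper affine subspace of $\RR^m$ there would be a nonzero affine map $g\colon\RR^m\to\RR$ with $g\circ\iota\equiv 0$, and writing $g$ in coordinates would turn this into a nontrivial linear dependence among $\mathbf 1,\phi_1,\dots,\phi_m$ --- a contradiction. This affine-spanning property is precisely what gives the uniqueness in (b), since two affine maps $\RR^m\to\RR^n$ that agree on an affinely spanning subset coincide.

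For the existence half of (b): an affine map $f\colon\Pi\to R$ into a Euclidean complex $R$ sitting face-wise affinely in $\RR^n$ becomes, after composing with $|R|\hookrightarrow\RR^n$, a face-wise affine map $|\Pi|\to\RR^n$, so each of its $n$ coordinate functions belongs to $V$; expanding them in the basis $\mathbf 1,\phi_1,\dots,\phi_m$ reads off an affine map $\phi\colon\RR^m\to\RR^n$ with $\phi\circ\iota=f$. I expect the real content of the proof to be concentrated in the first paragraph: recognizing that the correct object is the dual space $V$ of face-wise affine functionals rather than a configuration of points, and checking that face-wise affinity is cut out by finitely many linear conditions on the facets, so that $V$ is finite-dimensional. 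Everything afterwards is routine linear algebra. It is worth noting in passing that $\iota$ is canonical up to an affine automorphism of $\RR^m$ and that it reduces to the standard affine hull of $P$ when $\Pi$ is the face complex of a single polytope $P$.
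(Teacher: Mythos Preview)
Your proof is correct, but it takes a route dual to the paper's. The paper builds the target space directly as the colimit
\[
\AA_\Pi=\lim_{\to}\bigl(\Aff(p)\hookrightarrow\Aff(q)\ :\ p\subset q,\ p,q\in\Pi\bigr)
\]
in the category of affine spaces; the maps $\iota_p:\Aff(p)\to\AA_\Pi$ assemble into $\iota$, and property~(b) is then literally the universal property of that colimit. Injectivity of the $\iota_p$ (hence of $\iota$) comes from the existence of the affine map $\AA_\Pi\to\RR^d$ induced by the inclusions $\Aff(p)\hookrightarrow\RR^d$, which is exactly the Euclidean hypothesis. Your space $V$ of face-wise affine functionals is naturally identified with $\Aff(\AA_\Pi,\RR)$, so your $m=\dim V-1$ agrees with the paper's $m=\dim\AA_\Pi$, and the two embeddings differ only by an affine automorphism of $\RR^m$.

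What each approach buys: the paper's argument is shorter once one is willing to invoke colimits of affine spaces, and it makes the universal property~(b) tautological. Your version is more explicit and self-contained---you never need to say what a colimit of affine spaces is, you produce actual coordinate functions, and your verification that $\iota(|\Pi|)$ affinely spans $\RR^m$ (via the basis property of $\mathbf 1,\phi_1,\dots,\phi_m$) is a pleasant replacement for the abstract fact that the structure maps into a colimit jointly generate. Both proofs use the Euclidean hypothesis at the same point and for the same reason: to factor a known injective face-wise affine map through $\iota$ and thereby conclude that $\iota$ is injective.
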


\begin{proof}
Let $\RR^d$ be the ambient vector space for $\Pi$. We will identify the faces $p\in\Pi$ with the polytopes $P_p\subset\RR^d$ along the bijections $\pi_p$.

Denote
$$
\AA_{\Pi}:=\lim_{\to}\big(\Aff(p)\hookrightarrow\Aff(q)\ :\ p\subset q,\ p,q\in\Pi\big),
$$
where the colimit is taken in the category of affine spaces and affine maps, and consider the maps
$$
\iota_p:\Aff(p)\to\AA_{\Pi},\qquad p\in\Pi.
$$

Because of the commutative diagram of affine spaces and inclusion maps
$$
\xymatrix{
&\RR^d&&\\
\Aff(p)\ar[ur]\ar[rr]&&\Aff(q),\ar[ul]&p\subset q,\quad p,q\in\Pi,\\
}
$$
there is a unique affine map $\psi:\AA_{\Pi}\to\RR^d$, such that $\psi\circ\iota_p$ is the inclusion map $\Aff(p)\hookrightarrow\RR^d$ for every face $p$. This implies that the maps $\iota_p$ are all injective and, also, $\iota_p(p)\cap\iota_q(q)$ is a face both of $p$ and $q$ for $p,q\in\Pi$.

We can think of $\AA_{\Pi}$ as $\RR^m$ for $m=\dim\AA_{\Pi}$. In particular, we have the injective map $\iota:|\Pi|\to\RR^m$, defined by $\iota(x)=\iota_p(x)$ for $x\in p$, $p\in\Pi$. This map is affine on the faces of $\Pi$.

Let $R$ be a Euclidean complex in $\RR^n$ and $f:\Pi\to R$ be an affine map. As the case of $\Pi$, we identify the faces of $R$ with their polytopal preimages in $\RR^n$ along the structural bijections. For an affine map $f:\Pi\to R$ we have the diagram of affine spaces and affine maps
$$
\xymatrix{
&\RR^n&&\\
\Aff(p)\ar[ur]^{\Aff(f|_p)}\ar[rr]&&\Aff(q),\ar[ul]_{\Aff(f|_q)}&p\subset q,\quad p,q\in\Pi,\\
}
$$
where: $\Aff(f|_p)$ is the affine extension to $\Aff(p)$ of the restriction $f|_p$, composed with the inclusion into $\RR^n$, and similarly for $\Aff(f|_q)$, whereas the horizontal arrows represent the inclusion maps. Since $\RR^m$ is colimit, there is an affine map $\phi:\RR^m\to\RR^n$ for which $\phi\circ\iota=f$. If there was another affine map $\phi':\RR^m\to\RR^n$ with $\phi'\circ\iota=f$, then $\phi\circ\iota$ and $\phi'\circ\iota$ would coincide on the affine hulls $\Aff(p)$ for all $p\in\Pi$, contradicting the universality of $\RR^m$.
\end{proof}

\noindent\emph{Back to the algorithm:} for a Euclidean complex $\Pi_1$, the proof of Proposition \ref{Free-embedding} suggests an algorithm for constructing an embedding $\iota:|\Pi_1|\to\RR^m$ as in the proposition. (This step requires effective colimit computations.) Assume $\Pi_2$ is a Euclidean complex in $\RR^n$. Then $\Hom(\Pi_1,\Pi_2)$ can be thought of as a subset of $\Aff(\RR^m,\RR^n)$. Further, we can think of the affine maps $\RR^m\to\RR^n$ as pairs $(M,v)$, where $M$ is a $n\times m$ matrix and $v\in\RR^n$.

For every continuous monotone map $\Psi:\ess(\Lambda_{\Pi_1})\to\ess(\Lambda_{\Pi_2})$ the condition $(M,v)\in[\Psi]$ directly translates into linear constraints on the entries of $M$ and $v$. This is a deep shortcut in the determination process of the polytope $[\Psi]$, granted the embedding $\iota:|\Pi_1|\to\RR^m$ is already computed. (One still needs the fact that the polytopes $[\Psi]$ define a polytopal complex structure on $\Hom(\Pi_1,\Pi_2)$, which is the contents of Theorem \ref{Existence-theorem}.)

\subsection{Simplicial complexes}\label{Simplicial-complexes} Let $\Delta_1$ and $\Delta_2$ be simplicial complexes and consider the set of maps
\begin{align*}
\Lambda_{\Delta_1,\Delta_2}=\{\alpha:\vertex(\Delta_1)\to\Delta_2\ :\ &\alpha(v)\not=\emptyset\ \ \text{for all}\ \ v\in\vertex(\Delta_1)\ \text{and}\\
&\qquad\bigcup_{v\in\tau}\alpha(v)\in\Delta_2\ \text{for every}\ \tau\in\Delta_1\},
\end{align*}
made into a poset by letting $\alpha\le\beta$ if and only if $\alpha(v)\subset\beta(v)$ for all $v\in\vertex(\Delta_1)$. 

The poset $\Lambda_{\Delta_1,\Delta_2}$ is formally different from $\Lambda_{\Pi_1,\Pi_2}$, introduced at the end of the proof Theorem \ref{Existence-theorem}, when $\Pi_1=\Delta_1$ and $\Pi_2=\Delta_2$. But, as it follows from the discussion below, the two are naturally isomorphic.

For every element $\alpha\in\Lambda_{\Delta_1,\Delta_2}$ we have the product polytope
$$
\Box_\alpha=\prod_{v\in\vertex(\Delta_1)}\conv(\alpha(v))\subset|\Delta_2|^{\#\vertex(\Delta_1)}.
$$
One has $\alpha\le\beta$ if and only if $\Box_\alpha$ is a face of $\Box_\beta$ and all faces of $\Box_\beta$ arise this way.

Next we observe that each of the polytopes $\Box_\alpha$ is a product of simplices. The easiest way to see this is via observing the equality for the corresponding relative interiors:
\begin{align*}
\int(\Box_\alpha)=\prod_{v\in\vertex(\Delta_1)}\int(\alpha(v)).
\end{align*}
Moreover, every element of $\int(\Box_\alpha)$ is naturally interpreted as an affine map $\Delta_1\to\Delta_2$. All these observations are based by the defining property of a simplex that every map from its vertices to a polytope (in our situation, a simplex) extends uniquely to an affine map from the simplex.

As $\alpha$ varies, the polytopes $\Box_\alpha$ patch up to a polytopal complex whose face lattice is given by $\Lambda_{\Delta_1,\Delta_2}$. The proof of Theorem \ref{Existence-theorem} implies the part (a) of the following proposition, and the other parts are direct consequences:

\begin{proposition}\label{Existence-simplicial}
In the notation introduced above, we have:
\begin{enumerate}[{\rm (a)}]
\item $\Hom(\Delta_1,\Delta_2)$ is the polytopal complex, obtained by gluing the products of simplices $\Box_\alpha$ ($\alpha\in\Lambda_{\Delta_1,\Delta_2}$) along common faces as induced by the poset $\Lambda_{\Delta_1,\Delta_2}$.
\item An affine map $f:\Delta_1\to\Delta_2$ is a vertex of $\Hom(\Delta_1,\Delta_2)$ if and only if $f$ is a simplicial map.
\item If $\dim\Delta_2=1$ then $\Hom(\Delta_1,\Delta_2)$ is cubical complex.
\end{enumerate}
\end{proposition}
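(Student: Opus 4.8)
The plan is to obtain (a) as a specialization of the construction in the proof of Theorem~\ref{Existence-theorem}, matched with the $\Box_\alpha$-complex already assembled in the paragraphs preceding the proposition, and then to read off (b) and (c).

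First I would reconcile the two index posets. Given $f\in\Hom(\Delta_1,\Delta_2)$, the monotone map $\Psi_f$ sends $\lambda_p$ to the carrier in $\Delta_2$ of the image under $f$ of the barycenter of $P_p$; for a vertex $v$ this is the carrier $\alpha_f(v)$ of $f(v)$, and for a simplex $\tau$ with vertices $v_1,\dots,v_k$ one has $f(\beta(P_\tau))=\frac1k\sum_i f(v_i)$, whose carrier is $\bigvee_i\alpha_f(v_i)$ because each vertex of that join is a vertex of some $\alpha_f(v_i)$ and hence carries positive barycentric weight in $f(v_i)$, thus in the average. So $\Psi_f$ is determined by $\alpha_f$, and the existence of those joins shows $\alpha_f\in\Lambda_{\Delta_1,\Delta_2}$; conversely any $\alpha\in\Lambda_{\Delta_1,\Delta_2}$ equals $\alpha_f$ for any $f$ with $f(v)\in\int(\alpha(v))$ for all $v$. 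This gives an order isomorphism between $\Lambda_{\Pi_1,\Pi_2}$ of Theorem~\ref{Existence-theorem} (taken with $\Pi_i=\Delta_i$) and $\Lambda_{\Delta_1,\Delta_2}$.

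Next I would identify $[\Psi]$ with $\Box_\alpha$, using the succinct limit of Section~\ref{Inverse}. Let $p_1,\dots,p_l$ be the facets of $\Delta_1$. By Theorem~\ref{folklore}(j), $\Hom(P_{p_i},P_{\Psi(p_i)})$ is the power $P_{\Psi(p_i)}^{\#\vertex(p_i)}$ via $\phi\mapsto(\phi(v))_{v\in p_i}$; since $P_{\Psi(p_i)}=\conv\big(\bigcup_{w\in p_i}\alpha(w)\big)$ is a simplex, the affine-hull constraints defining $\AA_{p_i}(\Psi)$ force $\phi(v)\in P_{\Psi(p_i)}\cap\Aff(\alpha(v))=\conv(\alpha(v))$ for each $v\in p_i$ (a face of a simplex being exactly its intersection with its affine hull), so $R_{p_i}=\prod_{v\in p_i}\conv(\alpha(v))$. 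The restriction morphisms in $\cD_{\pol}(\Psi)^{\o}$ then become coordinate projections, whence $[\Psi]=\lim_\leftarrow\cD_{\pol}(\Psi)^{\o}=\prod_{v\in\vertex(\Delta_1)}\conv(\alpha(v))=\Box_\alpha$. Combined with the facts already recorded before the proposition — that the $\Box_\alpha$ glue along the order of $\Lambda_{\Delta_1,\Delta_2}$ and that each affine map $\Delta_1\to\Delta_2$ lies in the relative interior of a unique $\Box_\alpha$ — this proves (a).

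Parts (b) and (c) are then immediate. A vertex of $\Hom(\Delta_1,\Delta_2)$ is a zero-dimensional face $\Box_\alpha$, i.e.\ one with $\dim\conv(\alpha(v))=0$ for every $v$; this says $\alpha(v)$ is a single vertex of $\Delta_2$ for all $v$, and the condition $\bigcup_{v\in\tau}\alpha(v)\in\Delta_2$ then says precisely that $v\mapsto\alpha(v)$ is a simplicial map, whose unique affine extension is the point $\Box_\alpha$; conversely every simplicial map arises so. If $\dim\Delta_2=1$, each face of $\Delta_2$ is a point or a segment, so every $\Box_\alpha=\prod_v\conv(\alpha(v))$ is a cube and the gluing maps are products of face inclusions between cubes, i.e.\ $\Hom(\Delta_1,\Delta_2)$ is a cubical complex. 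The one step requiring genuine care is the identification $R_{p_i}=\prod_{v\in p_i}\conv(\alpha(v))$ — verifying that the affine-hull conditions in $\AA_{p_i}(\Psi)$ cut the free hom of simplices $P_{\Psi(p_i)}^{\#\vertex(p_i)}$ down to exactly the product of the designated subsimplices, and that the diagram's restriction maps are literally coordinate projections so that the inverse limit collapses to a single product; everything else is bookkeeping already carried out in the discussion preceding the proposition.
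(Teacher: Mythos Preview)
Your proposal is correct and follows exactly the approach the paper intends: the paper's own proof is the single sentence that (a) follows from the proof of Theorem~\ref{Existence-theorem} and that (b), (c) are direct consequences, with the surrounding discussion of the $\Box_\alpha$ already doing the preparatory work. You have simply unpacked that sentence---identifying $\Lambda_{\Pi_1,\Pi_2}$ with $\Lambda_{\Delta_1,\Delta_2}$, matching $[\Psi]$ with $\Box_\alpha$ via the free property of simplices and the succinct limit of Section~\ref{Inverse}, and reading off (b) and (c)---so there is nothing to contrast.
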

Above, the affine maps $\Delta_1\to\Delta_2$, mapping the vertices to vertices, are called \emph{simplicial maps}.

$\Hom(\Delta_1,\Delta_2)$ is a subcomplex of $\Hom_M(\vertex(\Delta_1),\vertex(\Delta_2))$, introduced by Kozlov \cite[p.143]{Kozlov}, where $M$ is the set of simplicial maps $\Delta_1\to\Delta_2$. The faces of $\Hom_M(\vertex(\Delta_1),\vertex(\Delta_2))$ are the products $\prod_{\vertex(\Delta_1)}\sigma_v$, where the factors $\sigma_v$ are the simplices satisfying the conditions: (i) $\vertex(\sigma_v)\subset\vertex(\Delta_2)$ for every $v$ and (ii) any map $\phi:\vertex(\Delta_1)\to\vertex(\Delta_2)$ with $\phi(v)\in\sigma_v$ for all $v$ defines a simplicial map $\Delta_1\to\Delta_2$. Such $\sigma_v$ may \emph{not} be a simplex in $\Delta_2$, not even when $\dim\Delta_1=1$. In fact, when $\dim\Delta_1=1$ the condition on the factors $\sigma_v$ just says that for any two distinct vertices $v,w\in\vertex(\Delta_1)$, connected by an edge in $\Delta_1$, the vertices of $\sigma_v$ are connected with those of $\sigma_w$ by edges in $\Delta_2$ -- a weaker condition than the requirement that $\sigma_v\cup\sigma_w\in\Delta_2$, used in the description of $\Hom(\Delta_1,\Delta_2)$

If $\Gamma_1$ and $\Gamma_2$ are simple graphs, viewed as one-dimensional simplicial complexes, then $\Hom_M(\vertex(\Delta_1),\vertex(\Delta_2))$ is known as \emph{Lovasz' complex} $\Hom(\Gamma_1,\Gamma_2)$ \cite[Definition 9.23]{Kozlov}. Notice that, if $\mathring\Gamma_1$ and $\mathring\Gamma_2$ are obtained from $\Gamma_1$ and $\Gamma_2$ by adding one loop per a vertex, then the affine hom-complex $\Hom(\Gamma_1,\Gamma_2)$ coincides with Lovasz' $\Hom(\mathring\Gamma_1,\mathring\Gamma_2)$. 

\bibliography{references}
\bibliographystyle{plain}

\end{document}